\documentclass{amsart}
\usepackage{color}
   \def\blue{}
\newcommand{\aspas}[1]{``{#1}''}
\usepackage[utf8]{inputenc}
\usepackage[utf8]{inputenc}
\usepackage[T1]{fontenc}
\usepackage[all]{xy}
\usepackage{lipsum}
\usepackage{url}
\usepackage{stackrel}

\usepackage{amsmath,amsthm,amssymb}
\usepackage{tikz}
\usepackage{graphicx}

\theoremstyle{definition}
\newtheorem{theorem}{Theorem}[section]
\newtheorem{definition}[theorem]{Definition}
\newtheorem{example}[theorem]{Example}
\newtheorem{proposition}[theorem]{Proposition}
\newtheorem{lemma}[theorem]{Lemma}
\newtheorem{remark}[theorem]{Remark}
\newtheorem{corollary}[theorem]{Corollary}

\numberwithin{equation}{section}

\begin{document}


\vspace{0.5in}

\renewcommand{\bf}{\bfseries}
\renewcommand{\sc}{\scshape}
\vspace{0.5in}

\title[Sectional number and coincidence property]%
{Relative sectional number and the coincidence property \\ }

\author[C. A. I. Zapata---F. A. T. Estrella]{Cesar A. Ipanaque Zapata---Felipe A. Torres Estrella}
\address{\textsc{Cesar A. Ipanaque Zapata}\\
Departamento de Matem\'atica, IME\\
Universidade de S\~ao Paulo\\
Rua do Mat\~ao 1010 CEP: 05508-090 S\~ao Paulo-SP, Brazil}

\email{cesarzapata@usp.br}

\address{\textsc{Felipe A. Torres Estrella} Facultad de Ciencias Matemáticas - FCM-UNMSM\\ Universidad Nacional Mayor de San Marcos\\
Ciudad Universitaria - UNMSM, Av. República de Venezuela 3400, Cercado de Lima, Lima, Perú}
\email{felipeantony.torres@unmsm.edu.pe }

\subjclass[2020]{Primary 55M20, 55R80, 55M30; Secondary 68T40}                                    %

\keywords{Fixed point property, Coincidence property, Configuration spaces, (relative) sectional category, (relative) sectional number, (relative) topological complexity}
\thanks {The first author would like to thank grant\#2023/16525-7, grant\#2022/16695-7 and grant\#2022/03270-8, S\~{a}o Paulo Research Foundation (FAPESP) for financial support.}

\begin{abstract} For a Hausdorff space $Y$, a topological space $X$ and a map $g:X\to Y$, we present a connection between the relative sectional number of the first coordinate projection $\pi_{2,1}^Y:F(Y,2)\to Y$ with respect to $g$, and the coincidence property (CP) for $(X,Y;g)$, where $F(Y,2)$ stands for the ordered configuration space of $2$ distinct points on $Y$, and $(X,Y;g)$ has the coincidence  property (CP) if, for every map $f:X\to Y$, there is a  point $x$ of $X$ such that $f(x)=g(x)$. Explicitly, we demonstrate that $(X,Y;g)$ has the CP if and only if 2 is the minimal cardinality of open covers $\{U_i\}_{1\leq i\leq n}$ of $X$ such that each $U_i$ admits a local lifting for $g$ with respect to $\pi_{2,1}^Y$. This characterization connects a standard problem in coincidence theory to current research trends in sectional category and topological robotics. Motivated by this connection, we introduce the notion of relative topological complexity of a map. 
\end{abstract}

\maketitle


\section{Introduction, outline and main results}
In this article \aspas{space} means a topological space, and by a \aspas{map} we will always mean a continuous map. Also by \aspas{fibration} we will mean Hurewicz fibration. We write $\overline{y_0}$ to denote the constant map in $y_0$. 

\medskip For spaces $X$, $Y$  and a map $g:X\to Y$, we say that $(X,Y;g)$  has \textit{the coincidence  property} (CP) if, for every map $f:X\to Y$, there is a point $x$ of $X$ such that $f(x)=g(x)$. In \cite{Holsztynski1964} the author says that a map $g:X\to Y$ is \aspas{universal} (or, equivalently, \aspas{coincidence producing} in \cite{schirmer1967}) if for any map $f:X\to Y$ there exists $x\in X$ such that $f(x)=g(x)$. Hence, $g:X\to Y$ is universal means, in our terminology, that $(X,Y;g)$ has the coincidence property. The coincidence property is a generalization of the fixed point property. Recall that a space $X$ has  \textit{the fixed point property} (FPP) if, for every self-map $f:X\to X$, there is a point $x$ of $X$ such that $f(x)=x$. Note that $(X,X;1_X)$ has CP if and only if $X$ has FPP.

\medskip For FPP characterizations are known in restrictive categories. For instance, in 1969, Fadell proved the following statement for spaces in the category of connected compact metric ANRs (see \cite{fadell1970} for references):
If $X$ is a Wecken space, then $X$ has the FPP if and only if $N(f)\neq 0$ for every self-map $f:X\to X$ \cite[Theorem 4.1, p. 17]{fadell1970}. Furthermore, if $X$ is a Wecken space satisfying the Jiang condition, $J(X)=\pi_1(X)$, then $X$ has the FPP if and only if $L(f)\neq 0$ for every self-map $f:X\to X$ \cite[Theorem 4.2, p. 17]{fadell1970}. Here, \aspas{Wecken space} is used in the sense of Fadell \cite[Definition 3.2, p. 15]{fadell1970}.

\medskip Recently, C. A. I. Zapata and J.~González presented in \cite{zapata2020}, in the category of Hausdorff spaces and maps, a characterization of FPP in terms of the sectional number ($\text{sec}(-)$). In addition, K. Tanaka presents a similar characterization for finite spaces \cite{tanaka2024}. We address the natural question of whether (and how) CP can be characterized. 

\medskip Such characterizations are known when $Y=I^n$ is the $n$-cube. For instance:
\begin{itemize}
    \item When $X$ is a compact space of covering dimension at most $n$, then $(X,I^n;g)$ has CP if and only if the element $g^\ast(e)$ of the $n$th Čech cohomology group $H^n(X,g^{-1}(S^{n-1});\mathbb{Z})$ is different from $0$ for a generator $e^n$ of $H^n(I^n,S^{n-1};\mathbb{Z})$ (see \cite{Holsztynski1976}).
    \item When $X=I^n$, then $(I^n,I^n;g)$ has CP if and only if the restriction of $g$ to $g^{-1}(S^{n-1})$ is essential (see \cite{schirmer1983}). 
\end{itemize}
 
 \medskip In this work we characterize CP in terms of relative sectional number ($\text{sec}_-(-)$). In fact, virtually all properties developed in \cite{zapata2020} for the case of FPP are extended here to CP realm.

\medskip This characterization will be made in terms of natural projections between ordered configuration spaces. In more detail, let $X$ be a space and $k\geq 1$. The ordered configuration space of $k$ distinct points on $X$ (see \cite{fadell1962configuration}) is the space \[F(X,k)=\{(x_1,\ldots,x_k)\in X^k\mid ~~x_i\neq x_j\text{   whenever } i\neq j \},\] topologised as a subspace of the Cartesian power $X^k$. For $k\geq r\geq 1,$ there is a natural projection $\pi_{k,r}^X\colon F(X,k) \to F(X,r)$ given by $\pi_{k,r}^X(x_1,\ldots,x_r,\ldots,x_k)=(x_1,\ldots,x_r)$.

\medskip The study of relative sectional number for the map $\pi_{k,r}^Y$ is still non-existent and, in fact, this work takes a first step in this direction. In \cite[Theorem 3.14, p. 565]{zapata2020}, in the category of Hausdorff spaces and maps, the authors show that $X$ has FPP if and only if the sectional number $\mathrm{sec}\hspace{.1mm}(\pi_{2,1}^X)$ equals 2.  In this work, for $Y$ a Hausdorff space and $g:X\to Y$ a map, we demonstrate that $(X,Y;g)$ has CP if and only if the relative sectional number $\mathrm{sec}\hspace{.1mm}_g(\pi_{2,1}^Y)$ equals 2 (Theorem \ref{characterizacao-ppf}). Several examples are presented to illustrate the results arising in this field. As shown in Section \ref{tc-map}, a particularly interesting feature of our characterization comes from its connection to current research trends in topological robotics. For this purpose, we introduce the notion of relative topological complexity of a map. 

\medskip The paper is organized as follows: In Section~\ref{sn}, we recall the notion of sectional category (secat(-)), sectional number (sec(-)) (Definition~\ref{sec-secat-def}), relative sectional category ($\text{secat}_-(-)$), relative sectional number ($\text{sec}_-(-)$) (Definition~\ref{relative-sec-secat-defn}) and basic results about these numerical invariants (Proposition~\ref{basic-properties} and Proposition~\ref{basic-properties-relative}). We study the relative sectional number for the projection map $\pi_{k,1}^Y$ with respect to a map $g:X\to Y$. Lemma~\ref{secop-pi-k-1} presents an upper bound, $\mathrm{sec}\hspace{.1mm}_g(\pi_{k,1}^Y)\leq k$. In Section~\ref{sec:cp}, our goal is to study the notion of coincidence property and its connection with sectional theory. We establish and demonstrate our Main Theorem,  which states that $(X,Y;g)$ has CP if and only if the relative sectional number $\mathrm{sec}\hspace{.1mm}_g(\pi_{2,1}^Y)$ equals 2 (Theorem \ref{characterizacao-ppf}). Proposition~\ref{conditions-fpp-implies-cp} presents some conditions such that FPP implies CP. Proposition~\ref{cohomology-cp} presents a condition in terms of cohomology to obtain CP. In Section \ref{tc-map}, we introduce and study the notion of relative topological complexity ($\text{TC}_-(-)$) of a map (Definition~\ref{defn:relative-tc}) and basic results about this numerical invariant. Theorem~\ref{relatice-tc-relatice-sec} presents lower and upper bounds for the relative topological complexity. Proposition~\ref{prop:cp-implies-value-relative-tc} shows that  CP can be characterized in terms of relative topological complexity. Proposition~\ref{cp-relative-tc} shows that if $(X,Y;g)$ has CP, then the inequalities $2\leq\mathrm{TC}_g(\pi_{2,1}^Y)\leq\min\{\mathrm{TC}(\pi_{2,1}^Y),2\mathrm{TC}(F(Y,2))\}$ hold. 

\section{Sectional theory}\label{sn}
We shall follow the terminology from \cite{zapata2020} and  recall the notion of sectional category and sectional number of a map. Also, we recall from \cite{hopf-inv} the notion of relative sectional category and from \cite{zapata2022} the notion of relative sectional number. If $f$ is homotopic to $g$ we shall denote by $f\simeq g$. The map $1_Z:Z\to Z$ denotes the identity map. 

Let $f:X\to Y$ be a map.  A \textit{(homotopy) cross-section} or \textit{section} of $f$ is a (homotopy) right inverse of $f$, i.e., a map $s:Y\to X$, such that $f\circ s = 1_Y$ ($f\circ s \simeq 1_Y$). Moreover, given a subspace $U\subset Y$, a \textit{(homotopy) local section} of $f$ over $U$ is a (homotopy) section of the restriction map $f_|:f^{-1}(U)\to U$, i.e., a map $s:U\to X$, such that $f\circ s$ is (homotopic to) the inclusion $\mathrm{incl}_U:U\hookrightarrow Y$.

\subsection{Sectional number} 
We recall the following definitions (we shall follow the terminology from \cite{zapata2020}).
\begin{definition}\label{sec-secat-def} Let $f\colon X\to Y$ be a map. 
\begin{enumerate}
    \item The \textit{sectional number} of $f$, $\mathrm{sec}\hspace{.1mm}(f)$, is the minimal cardinality of open covers of $Y$, such that each element of the cover admits a local section to $f$. 
    \item The \textit{sectional category} of $f$, also called Schwarz genus of $f$ \cite{schwarz1958genus}, and denoted by $\mathrm{secat}(f),$ is the minimal cardinality of open covers of $Y$, such that each element of the cover admits  a  homotopy local  section to $f$.
\end{enumerate}
\end{definition}

Note that $\mathrm{secat}\hspace{.1mm}(f)\leq \mathrm{sec}\hspace{.1mm}(f)$. Furthermore, if $f$ is a fibration, then $\mathrm{secat}\hspace{.1mm}(f)=\mathrm{sec}\hspace{.1mm}(f)$.

\medskip From \cite{zapata2022}, we recall that a \textit{quasi pullback} means a commutative diagram \begin{eqnarray*}
\xymatrix{ X^\prime \ar[r]^{\psi} \ar[d]_{f^\prime} & X \ar[d]^{f} & \\
       Y^\prime  \ar[r]_{\,\,\varphi} &  Y &}
\end{eqnarray*} such that for any space $Z$ and any maps $\alpha:Z\to Y^\prime$ and $\beta:Z\to X$ satisfying $\varphi\circ\alpha=f\circ\beta$,
\begin{eqnarray*}
\xymatrix{ Z \ar@/_10pt/[ddr]_{\alpha} \ar@/^10pt/[rrd]^{\beta}&  & \\
&X^\prime \ar[r]^{\psi} \ar[d]_{f^\prime} & X \ar[d]^{f} & \\
       &Y^\prime  \ar[r]_{\,\,\varphi} &  Y &}
\end{eqnarray*}
there exists a (not necessarily unique) map $h:Z\to X^\prime$
\begin{eqnarray*}
\xymatrix{ Z \ar@/_10pt/[ddr]_{\alpha} \ar@/^10pt/[rrd]^{\beta}\ar@{-->}[rd]^{h}&  & \\
&X^\prime \ar[r]^{\psi} \ar[d]_{f^\prime} & X \ar[d]^{f} & \\
       &Y^\prime  \ar[r]_{\,\, \varphi} &  Y &}
\end{eqnarray*}such that $f^\prime\circ h=\alpha$ and $\psi\circ h=\beta$.

Recall the pathspace construction from \cite[p. 407]{hatcheralgebraic}. For a map $f:X\to Y$, consider the space 
\begin{equation*}
E_f=\{(x,\gamma)\in X\times PY\mid~\gamma(0)=f(x)\},
\end{equation*}  where $PY=Y^I$ is the space of all paths $[0,1]\to Y$ equipped with the compact-open topology. The map \begin{equation*}
\rho_f:E_f\to Y,\qquad (x,\gamma)\mapsto \rho_f(x,\gamma)=\gamma(1),
\end{equation*} is a fibration. Furthermore, the projection onto the first coordinate $E_f\to X,~(x,\gamma)\mapsto x$ is a homotopy equivalence with homotopy inverse $c:X\to E_f$ given by $x\mapsto (x,\overline{f(x)})$, where $\overline{f(x)}$ is the constant path at $f(x)$. This renders the factorization $$\left(X\stackrel{f}\to Y\right)\,=\,\left(X\stackrel{c}{\to} E_f\stackrel{\rho_f}{\to} Y\right),$$ a composition of a homotopy equivalence followed by a fibration.  

\medskip We also recall the notion of LS category which, in our setting, is the
one given in \cite{cornea2003lusternik} plus one. The \textit{Lusternik-Schnirelmann category} (LS category) or \textit{category} of a space $X$, denoted by cat$(X)$, is the least integer $m$ such that $X$ can be covered by $m$ open sets, all of which are contractible within $X$. We have $\text{cat}(X)=1$ iff $X$ is contractible. The LS category is a homotopy invariant, i.e., if $X$ is homotopy equivalent to $Y$ (which we shall denote by $X\simeq Y$), then $\text{cat}(X)=\text{cat}(Y)$. 

\medskip For convenience, we record the following standard statements, most of which appear in chapter~4 of~\cite{zapata2022thesis}:
\begin{proposition}\label{basic-properties}
\noindent
\begin{enumerate}
\item  If the following diagram of spaces and maps between them 
\begin{eqnarray*}
\xymatrix{ X^\prime \ar[rr]^{\,\,} \ar[dr]_{f^\prime} & & X \ar[dl]^{f}  \\
        &  Y & }
\end{eqnarray*}
commutes, then $\mathrm{sec}\hspace{.1mm}(f^\prime)\geq \mathrm{sec}\hspace{.1mm}(f)$ and $\mathrm{secat}\hspace{.1mm}(f^\prime)\geq \mathrm{secat}\hspace{.1mm}(f)$.
\item  If the following diagram of spaces and maps between them  
\begin{eqnarray*}
\xymatrix{ X^\prime \ar[rr]^{\,\,} \ar[dr]_{f^\prime} & & X \ar[dl]^{f}  \\
        &  Y & }
\end{eqnarray*}
commutes up homotopy, then $\mathrm{secat}\hspace{.1mm}(f^\prime)\geq \mathrm{secat}\hspace{.1mm}(f)$.
\item \cite[Lemma 2.2]{zapata2022} Let $f:X\to Y$ and $g:Z\to Y$ be maps. If the following square
\begin{eqnarray*}
\xymatrix{ W \ar[r]^{\,\,} \ar[d]_{f^\prime} & X \ar[d]^{f} & \\
       Z  \ar[r]_{\,\, g} &  Y &}
\end{eqnarray*}
is a quasi pullback, then $\mathrm{sec}\hspace{.1mm}(f^\prime)\leq \mathrm{sec}\hspace{.1mm}(f)$. 
\item For a map $f:X\to Y$, $\mathrm{secat}\hspace{.1mm}(f)=\mathrm{secat}\hspace{.1mm}(\rho_f)$.
\item If $f\simeq g$, then $\mathrm{secat}\hspace{.1mm}(f)=\mathrm{secat}\hspace{.1mm}(g).$
\item If $p:E\to B$ is a fibration, then $\mathrm{sec}\hspace{.1mm}(p)\leq \text{cat}(B)$. In particular, for any map $f:X\to Y$, we have $\mathrm{secat}(f)\leq \text{cat}(Y)$.
\item If $f:X\to Y$ is nulhomotopic, then $\mathrm{secat}(f)=\text{cat}(Y).$
\end{enumerate}
\end{proposition}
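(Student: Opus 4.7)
The plan is to dispatch the seven items by standard sectional theory arguments, working through the invariants in the order given and reusing earlier items where it saves work. The main conceptual work is item (3), where strict (non-homotopy) local sections must be produced from the universal property of a quasi pullback; the other items are essentially bookkeeping once the right map is composed with the right section.

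For (1) and (2), write the hypothesis as $f' = f\circ\psi$ (respectively $f'\simeq f\circ\psi$) for the diagonal map $\psi\colon X'\to X$. If $\{U_i\}$ is an open cover of $Y$ and $s_i\colon U_i\to X'$ is a local section of $f'$, then $\psi\circ s_i\colon U_i\to X$ satisfies $f\circ(\psi\circ s_i)=f'\circ s_i=\mathrm{incl}_{U_i}$ (or the homotopy analogue), so the same cover witnesses $\mathrm{sec}(f)$ (resp.\ $\mathrm{secat}(f)$). Hence the asserted inequalities. For (3), given a cover $\{U_i\}$ of $Y$ with local sections $s_i\colon U_i\to X$ of $f$, set $V_i=g^{-1}(U_i)$, so $\{V_i\}$ covers $Z$. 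The maps $\alpha_i=\mathrm{incl}_{V_i}\colon V_i\to Z$ and $\beta_i=s_i\circ g|_{V_i}\colon V_i\to X$ satisfy $g\circ\alpha_i=g|_{V_i}=f\circ\beta_i$, so the quasi pullback hypothesis yields $h_i\colon V_i\to W$ with $f'\circ h_i=\alpha_i$; that is a local section of $f'$ over $V_i$, proving $\mathrm{sec}(f')\leq\mathrm{sec}(f)$.

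For (4), the factorization $f=\rho_f\circ c$ with $c$ a homotopy equivalence allows two applications of (2) (to $c$ and to a homotopy inverse of $c$) to give $\mathrm{secat}(f)=\mathrm{secat}(\rho_f)$. Item (5) is immediate from the definition of homotopy local section, since concatenating a homotopy $f\simeq g$ with the given homotopy $f\circ s\simeq\mathrm{incl}$ yields $g\circ s\simeq\mathrm{incl}$. For (6), pick a categorical cover $U_1,\ldots,U_m$ of $B$; for each $i$ there is a homotopy $H_i\colon U_i\times I\to B$ from the inclusion to a constant map at some $b_0\in B$. Choose any point $e_0$ in the fibre $p^{-1}(b_0)$ and use the homotopy lifting property of $p$ to lift $H_i$ starting from the constant map $U_i\to\{e_0\}\subset E$; the terminal map of this lift is a genuine section of $p$ over $U_i$. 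Thus $\mathrm{sec}(p)\leq m=\mathrm{cat}(B)$, and the ``in particular'' statement for general $f$ follows by applying this to the fibration $\rho_f$ together with item (4).

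Finally, for (7), assume $f\simeq\overline{y_0}$. By (6) we already have $\mathrm{secat}(f)\leq\mathrm{cat}(Y)$, so it suffices to show the reverse inequality. By (5), $\mathrm{secat}(f)=\mathrm{secat}(\overline{y_0})$. Any homotopy local section $s\colon U\to X$ of $\overline{y_0}$ gives $\overline{y_0}\circ s=\overline{y_0}|_U\simeq\mathrm{incl}_U\colon U\hookrightarrow Y$, i.e.\ the inclusion of $U$ into $Y$ is null-homotopic, so $U$ is contractible in $Y$. Hence any cover witnessing $\mathrm{secat}(\overline{y_0})$ is a categorical cover of $Y$, giving $\mathrm{cat}(Y)\leq\mathrm{secat}(\overline{y_0})=\mathrm{secat}(f)$. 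The hardest thing to get right is notational: keeping straight which items feed into which and, in item (3), remembering that the quasi pullback property is strict and must be applied to $(\alpha_i,\beta_i)$ with $\alpha_i$ the inclusion (not merely a homotopy-theoretic version), so that the output is a strict local section.
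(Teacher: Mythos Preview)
Your arguments for all seven items are correct and follow the standard route; the paper itself does not supply a proof of this proposition but records the items as known facts with a reference to \cite{zapata2022thesis}, so there is nothing in-paper to compare against. One cosmetic fix in item~(6): as you have written it, $H_i$ runs from the inclusion to the constant $\overline{b_0}$, so to invoke the homotopy lifting property with initial lift $\overline{e_0}$ you must first reverse $H_i$ (so that its time-$0$ slice is $\overline{b_0}=p\circ\overline{e_0}$); the time-$1$ slice of the resulting lift is then the strict local section you want.
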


In addition, we establish and prove the following statement. 

\begin{proposition}\label{secatsecat-secat}
 Consider the following diagram of spaces and maps between them 
\begin{eqnarray*}
\xymatrix{  X \ar[r]^{\varphi} \ar[d]_{f} & X^\prime \ar[d]^{f^\prime}  \\
      Y  \ar[r]_{\psi} &  Y^\prime }. 
\end{eqnarray*}  
\begin{enumerate}
\item If $f^\prime\circ\varphi\simeq \psi\circ f$, then $\text{secat}(f)\cdot\text{secat}(\psi)\geq\text{secat}(f^\prime).$
    \item If $f^\prime\circ\varphi= \psi\circ f$, then $\text{sec}(f)\cdot\text{sec}(\psi)\geq\text{sec}(f^\prime)$.
    \end{enumerate}   
\end{proposition}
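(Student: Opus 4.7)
Both statements follow from the same construction: pull back a cover witnessing $\mathrm{sec}(\psi)$ (resp.\ $\mathrm{secat}(\psi)$) through local sections to refine it by a cover witnessing $\mathrm{sec}(f)$ (resp.\ $\mathrm{secat}(f)$), and then use the square to push a local section of $f$ across $\varphi$ to get one of $f'$.

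More concretely, set $m=\mathrm{sec}(f)$ and $n=\mathrm{sec}(\psi)$, and pick open covers $\{U_1,\dots,U_m\}$ of $Y$ with local sections $s_i\colon U_i\to X$ of $f$, and $\{V_1,\dots,V_n\}$ of $Y'$ with local sections $t_j\colon V_j\to Y$ of $\psi$. Define
\[
W_{ij}=t_j^{-1}(U_i)\subseteq V_j,\qquad \sigma_{ij}=\varphi\circ s_i\circ t_j\colon W_{ij}\longrightarrow X'.
\]
First I would check that the $W_{ij}$ form an open cover of $Y'$: each $W_{ij}$ is open since $t_j$ is continuous, and for any $y'\in Y'$ we pick $j$ with $y'\in V_j$, then $i$ with $t_j(y')\in U_i$, so $y'\in W_{ij}$. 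Next, using $f\circ s_i=\mathrm{incl}_{U_i}$ and $\psi\circ t_j=\mathrm{incl}_{V_j}$ together with the strict commutativity $f'\circ\varphi=\psi\circ f$, I compute on $W_{ij}$:
\[
f'\circ\sigma_{ij}=f'\circ\varphi\circ s_i\circ t_j=\psi\circ f\circ s_i\circ t_j=\psi\circ t_j|_{W_{ij}}=\mathrm{incl}_{W_{ij}}.
\]
This shows $\sigma_{ij}$ is a local section of $f'$ over $W_{ij}$, giving $\mathrm{sec}(f')\leq mn$, which proves (2).

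For (1), I would run the same construction with homotopy sections. Let $m=\mathrm{secat}(f)$, $n=\mathrm{secat}(\psi)$, and choose covers as before with $f\circ s_i\simeq \mathrm{incl}_{U_i}$ and $\psi\circ t_j\simeq \mathrm{incl}_{V_j}$. The $W_{ij}$ and $\sigma_{ij}$ are defined exactly as above. The key step is to chain three homotopies: composing the homotopy $f\circ s_i\simeq \mathrm{incl}_{U_i}$ on the right with $t_j|_{W_{ij}}$ gives $f\circ s_i\circ t_j|_{W_{ij}}\simeq t_j|_{W_{ij}}$; post-composing with $\psi$ and then using $\psi\circ t_j\simeq \mathrm{incl}_{V_j}$ (restricted to $W_{ij}$) yields $\psi\circ f\circ s_i\circ t_j|_{W_{ij}}\simeq \mathrm{incl}_{W_{ij}}$; finally, the hypothesis $f'\circ\varphi\simeq\psi\circ f$ pre-composed with $s_i\circ t_j|_{W_{ij}}$ produces $f'\circ\sigma_{ij}\simeq \psi\circ f\circ s_i\circ t_j|_{W_{ij}}$. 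Concatenating these homotopies, $f'\circ\sigma_{ij}\simeq \mathrm{incl}_{W_{ij}}$, so $\sigma_{ij}$ is a homotopy local section of $f'$ over $W_{ij}$, giving $\mathrm{secat}(f')\leq mn$.

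The main (mild) obstacle is just bookkeeping of homotopies in part (1): one must be careful to restrict each homotopy to the correct open subset before concatenating, since $W_{ij}\subseteq V_j$ and $t_j(W_{ij})\subseteq U_i$ by construction, so all three homotopies can legitimately be pulled back to $W_{ij}$. No point-set subtleties intervene because $W_{ij}$ is open and the restrictions of sections and homotopies to open subsets stay continuous.
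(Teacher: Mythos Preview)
Your proof is correct and follows essentially the same approach as the paper: build $W_{ij}=t_j^{-1}(U_i)$ and the composite $\varphi\circ s_i\circ t_j$, then verify (up to homotopy, where relevant) that this yields a local section of $f'$. The paper's proof is more telegraphic, presenting the construction for a single pair $(U,V)$ and leaving the cover bookkeeping and homotopy concatenation implicit, whereas you spell these out explicitly; the content is the same.
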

\begin{proof}
   Suppose $U\subset Y$, $V\subset Y'$, $\xi:V\to Y$ and take $W=\xi^{-1}(U)\subset Y^\prime$. Note that a map $\sigma:U\to X$ yields a map $\delta=\left(W\stackrel{\xi_|}{\to} U\stackrel{\sigma}{\to} X\stackrel{\varphi}{\to} X^\prime\right).$ 
If $f^\prime\circ\varphi= \psi\circ f$, $\psi\circ \xi=\mathrm{incl}_{V}$ and $f\circ\sigma=\mathrm{incl}_U$, then $f^\prime\circ\delta=\mathrm{incl}_W$. Therefore, we conclude that $\text{sec}(f)\cdot\text{sec}(\psi)\geq\text{sec}(f^\prime)$.

\medskip The proof for the case $f^\prime\circ\varphi\simeq \psi\circ f$ is analogous.
\end{proof}

Proposition~\ref{secatsecat-secat} generalizes \cite[Lemma 3.16]{zapata2022}. Proposition~\ref{basic-properties} together with Proposition~\ref{secatsecat-secat} implies the following statement. It was stated in \cite[p. 341]{james1978} for pullbacks. 

\begin{corollary}\label{fibration-secat-1}
 Consider the following quasi pullback
\begin{eqnarray*}
\xymatrix{ W \ar[r]^{\,\,} \ar[d]_{f^\prime} & X \ar[d]^{f}  \\
       Z  \ar[r]_{\,\, g} &  Y}.
\end{eqnarray*}
If $f$ is a fibration and $g:Z\to Y$ is a map such that $\mathrm{secat}\hspace{.1mm}(g)=1$ (for example when $g$ is a homotopy equivalence), then $\mathrm{sec}\hspace{.1mm}(f^\prime)=\mathrm{sec}\hspace{.1mm}(f)$.
\end{corollary}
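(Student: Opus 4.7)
The plan is to prove two inequalities and combine them. One direction, $\mathrm{sec}(f')\leq\mathrm{sec}(f)$, is free: it is exactly part~(3) of Proposition~\ref{basic-properties} applied to the given quasi pullback. So the content is the reverse inequality $\mathrm{sec}(f)\leq\mathrm{sec}(f')$, and this is where the hypotheses on $f$ and $g$ enter.

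For the reverse inequality I would apply Proposition~\ref{secatsecat-secat}(1) to the same commutative square, but read with the roles of the two vertical arrows swapped. More precisely, identifying the ``$f$'' of the proposition with our $f'$, the ``$f'$'' of the proposition with our $f$, the ``$\varphi$'' with the top map $W\to X$ of the quasi pullback, and the ``$\psi$'' with our $g$, the square commutes strictly and hence
\begin{equation*}
\mathrm{secat}(f)\;\leq\;\mathrm{secat}(f')\cdot\mathrm{secat}(g)\;=\;\mathrm{secat}(f'),
\end{equation*}
where the last equality uses the assumption $\mathrm{secat}(g)=1$.

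Now I would invoke the two general facts recorded right after Definition~\ref{sec-secat-def}: since $f$ is a fibration one has $\mathrm{sec}(f)=\mathrm{secat}(f)$, while for any map the inequality $\mathrm{secat}(f')\leq\mathrm{sec}(f')$ always holds. Chaining these with the previous display gives
\begin{equation*}
\mathrm{sec}(f)\;=\;\mathrm{secat}(f)\;\leq\;\mathrm{secat}(f')\;\leq\;\mathrm{sec}(f'),
\end{equation*}
which combined with the quasi-pullback bound $\mathrm{sec}(f')\leq\mathrm{sec}(f)$ yields $\mathrm{sec}(f')=\mathrm{sec}(f)$.

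There is no serious obstacle here — the whole argument is a bookkeeping exercise in matching up the two earlier statements. The only subtle point is the parenthetical remark ``for example when $g$ is a homotopy equivalence'': this just records that a homotopy equivalence $g$ automatically admits a homotopy section (its homotopy inverse), hence $\mathrm{secat}(g)=1$, so the hypothesis is genuinely weaker than requiring $g$ to be an equivalence. I would mention this one-line justification explicitly so that the ``for example'' clause is not left unverified.
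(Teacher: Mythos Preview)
Your argument is correct and is exactly the approach the paper indicates: one inequality from the quasi-pullback property in Proposition~\ref{basic-properties}(3), the reverse via Proposition~\ref{secatsecat-secat} applied to the same square, then passing between $\mathrm{sec}$ and $\mathrm{secat}$ using that $f$ is a fibration. There is nothing to add.
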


Furthermore, Proposition~\ref{secatsecat-secat} implies the following statement.

\begin{proposition}\label{secat-sec-produc}
Let $f:X\to Y$ be a map and $Z$ be a space, then $\mathrm{sec}(1_Z\times f)=\mathrm{sec}(f)$ and $\mathrm{secat}(1_Z\times f)=\mathrm{secat}(f)$.
\end{proposition}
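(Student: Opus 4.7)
The plan is to establish each equality by verifying both inequalities directly; the two cases ($\mathrm{sec}$ and $\mathrm{secat}$) run in parallel. I may assume $Z$ is nonempty (the empty case being vacuous). Fix $z_0\in Z$ and introduce the canonical inclusion $\iota_Y:Y\to Z\times Y$, $y\mapsto(z_0,y)$, together with the projections $\pi_X:Z\times X\to X$ and $\pi_Y:Z\times Y\to Y$.

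For the inequalities $\mathrm{sec}(1_Z\times f)\leq\mathrm{sec}(f)$ and $\mathrm{secat}(1_Z\times f)\leq\mathrm{secat}(f)$, starting from an open cover $\{U_i\}$ of $Y$ with (homotopy) local sections $\sigma_i:U_i\to X$ of $f$, I would form the open cover $\{Z\times U_i\}$ of $Z\times Y$ with proposed local sections $1_Z\times\sigma_i:Z\times U_i\to Z\times X$. The identity $(1_Z\times f)(1_Z\times\sigma_i)=1_Z\times(f\sigma_i)$ reduces the section (resp.\ homotopy section) condition to the corresponding condition on $\sigma_i$; in the homotopy case, a homotopy $H:U_i\times I\to Y$ witnessing $f\sigma_i\simeq\mathrm{incl}_{U_i}$ lifts to $1_Z\times H$, witnessing $(1_Z\times f)(1_Z\times\sigma_i)\simeq\mathrm{incl}_{Z\times U_i}$.

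For the reverse inequalities $\mathrm{sec}(f)\leq\mathrm{sec}(1_Z\times f)$ and $\mathrm{secat}(f)\leq\mathrm{secat}(1_Z\times f)$, I would restrict along $\iota_Y$: given an open cover $\{W_j\}$ of $Z\times Y$ with (homotopy) local sections $\tau_j:W_j\to Z\times X$ of $1_Z\times f$, set $U_j:=\iota_Y^{-1}(W_j)$. These cover $Y$ because $\iota_Y(Y)=\{z_0\}\times Y\subset Z\times Y$ is covered by $\{W_j\}$. Define $\sigma_j:=\pi_X\circ\tau_j\circ\iota_Y|_{U_j}:U_j\to X$. Then
\[
 f\sigma_j \;=\; \pi_Y\circ(1_Z\times f)\circ\tau_j\circ\iota_Y|_{U_j} \;=\; \pi_Y\circ\iota_Y|_{U_j} \;=\; \mathrm{incl}_{U_j},
\]
with the middle equality upgraded to $\simeq$ in the secat case by restricting a homotopy $H_j:W_j\times I\to Z\times Y$ (witnessing $(1_Z\times f)\tau_j\simeq\mathrm{incl}_{W_j}$) to $\iota_Y(U_j)\times I$ and pushing it down via $\pi_Y$.

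The main obstacle is purely notational, namely tracking the restriction and projection of homotopies in the secat case. A more conceptual route is available for the strict part: the square with verticals $f$, $1_Z\times f$ and horizontals $\iota_X,\iota_Y$ is a genuine pullback, so Proposition~\ref{basic-properties}(3) immediately gives $\mathrm{sec}(f)\leq\mathrm{sec}(1_Z\times f)$; dually, the square with horizontals $\pi_X,\pi_Y$ together with the evident section of $\pi_Y$ gives $\mathrm{sec}(1_Z\times f)\geq\mathrm{sec}(f)$ via Proposition~\ref{secatsecat-secat}. However, since the analogous homotopy-pullback statement for $\mathrm{secat}$ is not directly available from the excerpt, and $\iota_Y$ admits no homotopy section unless $Z$ is contractible, I favor the uniform direct construction above, which treats both invariants in one stroke.
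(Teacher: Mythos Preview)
Your argument is correct. The inequality $\mathrm{sec}(1_Z\times f)\leq\mathrm{sec}(f)$ (and its secat analogue) is proved exactly as in the paper, via $1_Z\times\sigma_i$. For the reverse inequality your direct restriction along $\iota_Y$ works fine in both the strict and homotopy settings.

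The paper handles the reverse inequality differently: it applies Proposition~\ref{secatsecat-secat}(2) to the commuting square with horizontals $p_2:Z\times X\to X$, $p_2:Z\times Y\to Y$ and verticals $1_Z\times f$, $f$, using that $\mathrm{sec}(p_2)=\mathrm{secat}(p_2)=1$. This one line covers both $\mathrm{sec}$ and $\mathrm{secat}$ at once. Your closing remark that ``the analogous homotopy-pullback statement for $\mathrm{secat}$ is not directly available'' is therefore too pessimistic: you do not need $\iota_Y$ to have a homotopy section, because Proposition~\ref{secatsecat-secat} already supplies the $\mathrm{secat}$ inequality from the projection square (Item~(1), or Item~(2) since the square commutes strictly). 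Your hands-on construction is of course fine and self-contained, but the paper's route is shorter and reuses machinery already in place.
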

\begin{proof}
 Suppose $U\subset Y$. Note that a map $\sigma:U\to X$ yields a map $1_Z\times \sigma:Z\times U\to Z\times X$. If $f\circ\sigma=\mathrm{incl}_U$, then $\left(1_Z\times f\right) \circ\left(1_Z\times\sigma\right)=\mathrm{incl}_{Z\times U}$. Hence, $\mathrm{sec}(f)\geq \mathrm{sec}(1_Z\times f)$. The other inequality follows from the fact that the square 
\begin{eqnarray*}
\xymatrix@C=2cm{ Z\times X \ar[r]^{p_2} \ar[d]_{1_Z\times f} & X \ar[d]^{f}  \\
       Z\times Y \ar[r]_{p_2} &  Y }
\end{eqnarray*} where $p_2$ is the second coordinate projection, 
is commutative together with Item (2) from Proposition~\ref{secatsecat-secat} (here note that $\mathrm{sec}(p_2)=1$). Therefore, we conclude that $\mathrm{sec}(1_Z\times f)=\mathrm{sec}(f)$.

\medskip The other equality, $\mathrm{secat}(1_Z\times f)=\mathrm{secat}(f)$, is proved analogously.
\end{proof}

We have the following remark.

\begin{remark}
By Proposition~\ref{basic-properties}(3) we have that if the following square
\begin{eqnarray*}
\xymatrix{ W \ar[r]^{\,\,} \ar[d]_{f^\prime} & X \ar[d]^{f} & \\
       Z  \ar[r]_{\,\, } &  Y &}
\end{eqnarray*}
is a quasi pullback, then $\mathrm{sec}\hspace{.1mm}(f^\prime)\leq \mathrm{sec}\hspace{.1mm}(f)$. In contrast, the (corresponding) assertion ($\mathrm{secat}\hspace{.1mm}(f^\prime)\leq \mathrm{secat}\hspace{.1mm}(f)$) 
may fail. This can be seen by considering, for example, the constant map $\overline{1}:X\to\mathbb{R}^2$ and the inclusion $i:S^1\hookrightarrow \mathbb{R}^2$, \begin{eqnarray*}
\xymatrix{  & X \ar[d]^{\overline{1}}  \\
       S^1  \ar@{^{(}->}[r]_{\,\, i} &  \mathbb{R}^2 }.
\end{eqnarray*} Note that its canonical pullback is given by: 
\begin{eqnarray*}
\xymatrix{ X\ar[r]^{\,\,} \ar[d]_{\overline{1}} & X \ar[d]^{\overline{1}}  \\
       S^1 \ar@{^{(}->}[r]_{\,\, i} &  \mathbb{R}^2 }.
\end{eqnarray*} Hence, $\mathrm{secat}\hspace{.1mm}(\overline{1}:X\to S^1)=\mathrm{cat}(S^1)=2$ and $\mathrm{secat}\hspace{.1mm}(\overline{1}:X\to \mathbb{R}^2)=\mathrm{cat}(\mathbb{R}^2)=1$.
\end{remark}

\subsection{Relative sectional number}
Now, we recall the notion of relative sectional category together with basic results from \cite{hopf-inv} about this numerical invariant. Note that the notion of relative sectional category in González-Grant-Vandembroucq's paper \cite{hopf-inv} is given for a fibration. Here, the definition is stated for arbitrary continuous maps. Also, we record the notion of relative sectional number from \cite{zapata2022}.

\medskip For maps $p:E\to B$ and $g:X\to B$, \begin{eqnarray*}
\xymatrix{  & E \ar[d]^{p} & \\
       X  \ar[r]_{\,\, g} &  B &}
\end{eqnarray*} we consider the canonical pullback \[g^\ast(p):X\times_B E\to X,\] \begin{eqnarray*}
\xymatrix{ X\times_B E \ar[r]^{\,\,} \ar[d]_{g^\ast(p)} & E \ar[d]^{p} & \\
       X \ar[r]_{\,\, g} &  B &}
\end{eqnarray*} where $X\times_B E=\{(x,e)\in X\times E:~g(x)=p(e)\}$ and $g^\ast(p)(x,e)=x$. 

\medskip We have the following definitions.
\begin{definition}\label{relative-sec-secat-defn}
For maps $p:E\to B$ and $g:X\to B$. 
\begin{enumerate}
    \item The \textit{relative sectional number} $\mathrm{sec}\hspace{.1mm}_g(p)$, is the sectional number of the canonical pullback $g^\ast(p)$, i.e., \[\mathrm{sec}\hspace{.1mm}_g(p)=\mathrm{sec}\hspace{.1mm}(g^\ast(p)).\]
    \item The \textit{relative sectional category} $\mathrm{secat}\hspace{.1mm}_g(p)$, is the sectional category of the canonical pullback $g^\ast(p)$, i.e., \[\mathrm{secat}\hspace{.1mm}_g(p)=\mathrm{secat}\hspace{.1mm}(g^\ast(p)).\]
\end{enumerate}
\end{definition}

Note that $\mathrm{sec}\hspace{.1mm}_{1_B}(p)=\mathrm{sec}\hspace{.1mm}(p)$ and $\mathrm{secat}\hspace{.1mm}_{1_B}(p)=\mathrm{secat}\hspace{.1mm}(p)$ for any map $p:E\to B$. In addition, we have $\mathrm{secat}\hspace{.1mm}_g(p)\leq \mathrm{sec}\hspace{.1mm}_g(p)$ for any maps $p:E\to B$ and $g:X\to B$. Furthermore, if $p:E\to B$ is a fibration, then $\mathrm{sec}\hspace{.1mm}_g(p) = \mathrm{secat}\hspace{.1mm}_g(p)$ for any map $g:X\to B$. If $X\subset B$ and $g$ is the inclusion, note that $\mathrm{sec}\hspace{.1mm}_{g}(p)=\mathrm{sec}\hspace{.1mm}\left(p_|:p^{-1}(A)\to A\right)$ and $\mathrm{secat}\hspace{.1mm}_{g}(p)=\mathrm{secat}\hspace{.1mm}\left(p_|:p^{-1}(A)\to A\right)$.

\begin{remark}\cite[Remark 2.4]{zapata2022}\label{lifting}
Note that the number $\mathrm{sec}\hspace{.1mm}_g(p)$ coincides with the minimal cardinality of open covers $\{U_i\}_{1\leq i\leq n}$ of $X$, such that each element of the cover admits a lift $\sigma_i:U_i\to E$ to $g$, i.e., $p\circ \sigma_i=g_{\mid U_i}$. 
\end{remark}

Remark~\ref{lifting} implies the following 

\begin{proposition}\label{composition}
    Let $p_1:E_1\to E_2$, $p_2:E_2\to B$ and $g:X\to B$ be maps, 
    \begin{eqnarray*}
\xymatrix{ & E_1 \ar[d]^{p_1} \\
& E_2 \ar[d]^{p_2}  \\
     X  \ar[r]_{g} &  B}
\end{eqnarray*}
    then \[\mathrm{sec}\hspace{.1mm}_g(p_2)\leq\mathrm{sec}\hspace{.1mm}_g(p_2\circ p_1)\leq\mathrm{sec}\hspace{.1mm}_g(p_2)\cdot\mathrm{sec}\hspace{.1mm}(p_1).\]
\end{proposition}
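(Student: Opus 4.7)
The plan is to use the lifting characterisation from Remark~\ref{lifting}, which recasts $\mathrm{sec}\hspace{.1mm}_g(p)$ as the minimal cardinality of an open cover $\{U_i\}$ of $X$ such that each $U_i$ admits a map $\sigma_i\colon U_i\to E$ with $p\circ\sigma_i=g_{|U_i}$. Both inequalities should reduce to elementary compositions of such lifts.

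For the left inequality $\mathrm{sec}\hspace{.1mm}_g(p_2)\leq\mathrm{sec}\hspace{.1mm}_g(p_2\circ p_1)$, I would start with a minimal open cover $\{U_i\}$ of $X$ equipped with lifts $\sigma_i\colon U_i\to E_1$ of $g_{|U_i}$ with respect to $p_2\circ p_1$. Then the maps $p_1\circ\sigma_i\colon U_i\to E_2$ satisfy $p_2\circ(p_1\circ\sigma_i)=(p_2\circ p_1)\circ\sigma_i=g_{|U_i}$, so the same cover witnesses $\mathrm{sec}\hspace{.1mm}_g(p_2)$.

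For the right inequality, let $n=\mathrm{sec}\hspace{.1mm}_g(p_2)$ and $m=\mathrm{sec}\hspace{.1mm}(p_1)$. Choose an open cover $\{U_i\}_{i=1}^n$ of $X$ with lifts $\sigma_i\colon U_i\to E_2$ satisfying $p_2\circ\sigma_i=g_{|U_i}$, and an open cover $\{V_j\}_{j=1}^m$ of $E_2$ with local sections $\tau_j\colon V_j\to E_1$ of $p_1$. The natural idea is to refine using the pre-images $W_{i,j}:=\sigma_i^{-1}(V_j)\subset U_i$; since $\{V_j\}$ covers $E_2$ we obtain an open cover $\{W_{i,j}\}$ of $X$ of cardinality at most $nm$. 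On each $W_{i,j}$, the composite $\tau_j\circ\sigma_i\colon W_{i,j}\to E_1$ is a lift of $g_{|W_{i,j}}$ with respect to $p_2\circ p_1$, because $(p_2\circ p_1)\circ(\tau_j\circ\sigma_i)=p_2\circ(p_1\circ\tau_j)\circ\sigma_i=p_2\circ\sigma_i=g_{|W_{i,j}}$.

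There is no real obstacle here; the only point requiring mild care is to verify that the $W_{i,j}$'s do form an open cover of $X$, which follows from the identity $U_i=\bigcup_j\sigma_i^{-1}(V_j)$ (a consequence of $\{V_j\}$ covering $E_2$) combined with $X=\bigcup_i U_i$. Once this is noted, chaining the two bounds yields the desired inequality.
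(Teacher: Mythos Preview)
Your proposal is correct and follows essentially the same approach as the paper: both inequalities are established via the lifting characterisation of Remark~\ref{lifting}, composing a lift $\sigma\colon U\to E_1$ with $p_1$ for the left inequality, and refining by $W_{i,j}=\sigma_i^{-1}(V_j)$ with composites $\tau_j\circ\sigma_i$ for the right one. Your write-up is slightly more explicit about why the $W_{i,j}$ cover $X$, but the argument is identical in substance.
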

\begin{proof}
  Suppose $U\subset X$. Note that a map $\sigma:U\to E_1$ yields a map $\delta=\left(U\stackrel{\sigma}{\to} E_1\stackrel{p_1}{\to} E_2\right).$ 
If $\left(p_2\circ p_1\right)\circ\sigma=g_{| U}$, then $p_2\circ\delta=g_{| U}$. Thus, $\mathrm{sec}\hspace{.1mm}_g(p_2\circ p_1)\geq \mathrm{sec}\hspace{.1mm}_g(p_2)$.

Now, suppose $U\subset X$, $\sigma:U\to E_2$ is a map and $V\subset E_2$. Take $W=\sigma^{-1}(V)\subset X$. Note that a map $s:V\to E_1$ yields a map $\rho=\left(W\stackrel{\sigma_|}{\to} V\stackrel{s}{\to} E_1\right)$. If $p_2\circ\sigma=g_{| U}$ and $p_1\circ s=\mathrm{incl}_V$, then $(p_2\circ p_1)\circ\rho=g_{| W}$. Hence, $\mathrm{sec}\hspace{.1mm}_g(p_2)\cdot\mathrm{sec}\hspace{.1mm}(p_1)\geq \mathrm{sec}\hspace{.1mm}_g(p_2\circ p_1)$.   
\end{proof}

\medskip From Proposition~\ref{basic-properties}, we have the following statements:

\begin{proposition}\label{basic-properties-relative}
Let $p:E\to B$ and $g:X\to B$ be maps.
\begin{enumerate}
\item Then $\mathrm{sec}_g\hspace{.1mm}(p)\leq\mathrm{sec}\hspace{.1mm}(p)$ and $\mathrm{secat}_{g}\hspace{.1mm}(p)\leq \text{cat}(X)$. 
\item If $g\simeq g'$, then $\mathrm{secat}_{g}\hspace{.1mm}(p)=\mathrm{secat}_{g'}\hspace{.1mm}(p)$.
\item Consider the following diagram 
\begin{eqnarray*}
\xymatrix{  X\times_B E \ar[r]^{} \ar[d]_{g^\ast(p)} & E \ar[d]^{p} & \\
       X  \ar[r]_{g} &  B &} 
\end{eqnarray*}  where the square is the canonical pullback. Then \[\mathrm{sec}_g\hspace{.1mm}(p)\cdot \mathrm{sec}\hspace{.1mm}(g)\geq \mathrm{sec}\hspace{.1mm}(p) \quad \text{ and }\quad \mathrm{secat}_g\hspace{.1mm}(p)\cdot\mathrm{secat}\hspace{.1mm}(g)\geq\mathrm{secat}\hspace{.1mm}(p).\]
    In particular, if $p$ is a fibration and $\mathrm{secat}\hspace{.1mm}(g)=1$ (for example when $g$ is a homotopy equivalence), then $\mathrm{sec}_g\hspace{.1mm}(p)=\mathrm{sec}\hspace{.1mm}(p)$. 
\end{enumerate}
\end{proposition}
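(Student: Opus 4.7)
The plan is to deduce all three items as direct consequences of the general results compiled in Propositions~\ref{basic-properties} and~\ref{secatsecat-secat}, applied to the canonical pullback square
\[
\xymatrix{ X\times_B E \ar[r] \ar[d]_{g^\ast(p)} & E \ar[d]^{p} \\
       X \ar[r]_{g} & B. }
\]

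For item (1), the square is automatically a quasi pullback, so Proposition~\ref{basic-properties}(3) yields $\mathrm{sec}_g(p) = \mathrm{sec}(g^\ast(p))\leq \mathrm{sec}(p)$, while the bound $\mathrm{secat}_g(p)\leq \mathrm{cat}(X)$ is Proposition~\ref{basic-properties}(6) applied to the map $g^\ast(p)\colon X\times_B E\to X$. For item (3), the square commutes strictly, so Proposition~\ref{secatsecat-secat}(2) immediately gives both $\mathrm{sec}(g^\ast(p))\cdot\mathrm{sec}(g)\geq\mathrm{sec}(p)$ and the corresponding inequality for $\mathrm{secat}$, which are the two stated product inequalities. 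The ``in particular'' clause then combines this product inequality with item~(1): when $p$ is a fibration so is the pullback $g^\ast(p)$, so $\mathrm{sec}$ and $\mathrm{secat}$ coincide on both; with $\mathrm{secat}(g)=1$ this forces $\mathrm{sec}(p)=\mathrm{secat}(p)\leq\mathrm{secat}_g(p)\cdot\mathrm{secat}(g)=\mathrm{sec}_g(p)$, while item~(1) supplies the reverse.

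Item~(2) is the delicate one. My plan is to reduce to the fibration case via the pathspace factorisation $p = \rho_p\circ c$ of Proposition~\ref{basic-properties}(4). First I would establish $\mathrm{secat}_g(p)=\mathrm{secat}_g(\rho_p)$ by comparing homotopy local sections through the homotopy equivalence $c\colon E\to E_p$: one direction uses Proposition~\ref{basic-properties}(1) on the natural over-$X$ triangle $g^\ast(p)\to g^\ast(\rho_p)$ induced by $c$, and the reverse direction comes from Proposition~\ref{basic-properties}(2) applied to a homotopy-commutative over-$X$ triangle built from a homotopy inverse of $c$. Once in the fibration setting, a homotopy $H\colon X\times I\to B$ from $g$ to $g'$ pulls $\rho_p$ back to a fibration over $X\times I$ whose restrictions to the two ends are $g^\ast(\rho_p)$ and $(g')^\ast(\rho_p)$; the covering homotopy property then produces a fiber-homotopy-equivalence between them over $X$, and hence equal $\mathrm{secat}$ by Proposition~\ref{basic-properties}(5). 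I expect the main obstacle to be the first reduction, since canonical pullbacks do not preserve homotopy equivalences in general; handling the identification $\mathrm{secat}_g(p)=\mathrm{secat}_g(\rho_p)$ at the level of homotopy local sections is where the argument is most subtle.
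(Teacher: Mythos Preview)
Your arguments for items~(1) and~(3) are correct and coincide with what the paper intends: the paper deduces the whole proposition ``from Proposition~\ref{basic-properties}'' (together with Proposition~\ref{secatsecat-secat}), and that is precisely what you do, including the handling of the ``in particular'' clause via the fact that pullbacks of fibrations are fibrations.

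For item~(2) there is a genuine gap, and it lies exactly where you suspected. The over-$X$ triangle induced by $c\colon E\to E_p$ gives only $\mathrm{secat}_g(p)\geq\mathrm{secat}_g(\rho_p)$. For the reverse inequality you propose to build, from a homotopy inverse of $c$, a map $X\times_B E_p\to X\times_B E$ making the triangle over $X$ homotopy-commute, and then invoke Proposition~\ref{basic-properties}(2). But any such map must land in the \emph{strict} fibre product $X\times_B E=\{(x,e):g(x)=p(e)\}$, while the homotopy inverse $\pi_1\colon E_p\to E$ only satisfies $p\circ\pi_1\simeq\rho_p$; nothing forces $p(e)$ to lie in the image of $g$. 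Concretely, take $B=[0,1]$, $E=\{0\}$ with $p$ the inclusion, $X=\{\ast\}$, and $g'=\overline{1}$. Then $(g')^{\ast}(p)$ is the map $\emptyset\to\{\ast\}$, so $\mathrm{secat}_{g'}(p)=\infty$, whereas $(g')^{\ast}(\rho_p)$ has nonempty domain (paths in $[0,1]$ from $0$ to $1$) and hence $\mathrm{secat}_{g'}(\rho_p)=1$. Since moreover $g'\simeq\overline{0}=:g$ and $\mathrm{secat}_{g}(p)=1$, this single example shows simultaneously that the reduction $\mathrm{secat}_g(p)=\mathrm{secat}_g(\rho_p)$ fails and that item~(2) as stated (for arbitrary $p$) is false; the paper's one-line justification does not cover this case either. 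Under the additional hypothesis that $p$ is a fibration, your second step---pulling back $p$ along a homotopy $g\simeq g'$ and using the covering homotopy property to obtain a fibre-homotopy equivalence between $g^\ast(p)$ and $(g')^\ast(p)$ over $X$---is correct and yields the result.
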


One has the following example.

\begin{example}
 Let $B\subset X$. If $p:E\to B$ is a map and $r:X\to B$ is a retraction, then $\mathrm{sec}_r\hspace{.1mm}(p)=\mathrm{sec}\hspace{.1mm}(p)$. 
\end{example}

\subsection{Configuration spaces}\label{secconfespa}

Let $X$ be a space, and $k\geq 1$. The \textit{ordered configuration space} of $k$ distinct points on $X$ (see \cite{fadell1962configuration}) is the topological space \[F(X,k)=\{(x_1,\ldots,x_k)\in X^k\mid ~~x_i\neq x_j\text{   whenever } i\neq j \},\] topologised as a subspace of the Cartesian power $X^k$.

\medskip For $k\geq r\geq 1,$ there is a natural projection \blue{$\pi_{k,r}^X\colon F(X,k) \to F(X,r)$ given by $\pi_{k,r}^X(x_1,\ldots,x_r,\ldots,x_k)=(x_1,\ldots,x_r)$.}

\medskip By Proposition~\ref{basic-properties-relative}(1) together with \cite[Proposition 3.10, p. 565]{zapata2020} one has the following statement.

\begin{lemma}[Key lemma]\label{secop-pi-k-1}
 For any $k\geq 2$, a Hausdorff space $Y$, and any map $g:X\to Y$, we have
$\mathrm{sec}\hspace{.1mm}_g(\pi_{k,1}^Y)\leq k$.
\end{lemma}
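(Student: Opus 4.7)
The plan is to chain two inequalities in the obvious way suggested by the citation preceding the lemma. First, I invoke Item~(1) of Proposition~\ref{basic-properties-relative}, which asserts that the relative sectional number never exceeds the absolute sectional number: for any map $g:X\to Y$, one has
\[
\mathrm{sec}\hspace{.1mm}_g(\pi_{k,1}^Y) \;\leq\; \mathrm{sec}\hspace{.1mm}(\pi_{k,1}^Y).
\]
Second, I apply \cite[Proposition~3.10, p.~565]{zapata2020}, which provides the absolute bound $\mathrm{sec}\hspace{.1mm}(\pi_{k,1}^Y) \leq k$ whenever $Y$ is Hausdorff. Concatenating the two yields exactly the stated inequality, so no additional argument is required beyond citing these two facts.

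For completeness I would briefly recall the cover underlying the cited proposition, since it is the only piece of content hidden behind the citations. Fix $k$ pairwise distinct points $a_1,\ldots,a_k \in Y$; Hausdorffness makes each singleton $\{a_i\}$ closed, so the sets $U_i := Y \setminus \{a_i\}$ are open and cover $Y$ (no point of $Y$ can equal all $k$ distinct $a_i$'s simultaneously). Define local sections $\sigma_i : U_i \to F(Y,k)$ by
\[
\sigma_i(y) \;=\; (y,\,a_1,\ldots,\widehat{a_i},\ldots,a_k),
\]
where the hat denotes omission. The tuple lands in $F(Y,k)$ precisely because $y \neq a_i$ on $U_i$ and the $a_j$ are mutually distinct, and evidently $\pi_{k,1}^Y\circ \sigma_i = \mathrm{incl}_{U_i}$. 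Thus $\{U_i\}_{i=1}^k$ is an open cover of $Y$ by $k$ sets each admitting a local section.

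There is no genuine obstacle to the proof; the only subtle point worth flagging is the degenerate case where $Y$ has fewer than $k$ points, in which $F(Y,k) = \emptyset$ and the bound must be read vacuously (or the standing assumption is that $Y$ has at least $k$ points, as is customary when one discusses $F(Y,k)$ nontrivially).
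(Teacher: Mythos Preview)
Your argument is correct and identical to the paper's: both simply chain Item~(1) of Proposition~\ref{basic-properties-relative} with \cite[Proposition~3.10]{zapata2020}. One slip in your optional sketch of the cited cover: with $U_i := Y\setminus\{a_i\}$ the tuple $\sigma_i(y)=(y,a_1,\ldots,\widehat{a_i},\ldots,a_k)$ need not lie in $F(Y,k)$, since nothing prevents $y=a_j$ for some $j\neq i$; the open sets that actually work are $U_i := Y\setminus\{a_1,\ldots,\widehat{a_i},\ldots,a_k\}$, and these still cover $Y$ because any point coincides with at most one of the $a_j$.
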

\begin{proof}
By Proposition~\ref{basic-properties-relative}(1) we have that $\mathrm{sec}\hspace{.1mm}_g(\pi_{k,1}^Y)\leq \mathrm{sec}\hspace{.1mm}(\pi_{k,1}^Y)$. By \cite[Proposition 3.10, p. 565]{zapata2020}, we obtain $\mathrm{sec}\hspace{.1mm}(\pi_{k,1}^Y)\leq k$. Hence, $\mathrm{sec}\hspace{.1mm}_g(\pi_{k,1}^Y)\leq k$. 
\end{proof}

\section{Coincidence property}\label{sec:cp}

In this section, we present the coincidence property and its connections with sectional theory. 

\begin{definition}[Coincidence property]\label{defifpp}
For spaces $X$, $Y$  and a map $g:X\to Y$, we say that $(X,Y;g)$  has \textit{the coincidence  property} (CP) if, for every map $f:X\to Y$, there is a  point $x$ of $X$ such that $f(x)=g(x)$.
\end{definition}

As mentioned in the introduction, the coincidence property is a generalization of the fixed point property. Recall that a space $X$ has \textit{the fixed point property} (FPP) if, for every self-map $f:X\to X$, there is a  point $x$ of $X$ such that $f(x)=x$. Note that $(X,X;1_X)$ has CP if and only if $X$ has FPP.

\begin{example}\label{constant-has-not-cp}
Let $X, Y$ be spaces where $Y$ admits at least two elements. Recall that $\overline{y_0}:X\to Y$ denotes the constant map in $y_0\in Y$. One has $(X,Y;\overline{y_0})$ has not CP. 
\end{example}

We have the following statement, which says that CP implies FPP. 

\begin{proposition}\label{coincidence-implies-fpp}
 Let $X$, $Y$ be spaces, and $g:X\to Y$ be a map. If $(X,Y;g)$ has CP, then $Y$ has FPP.  
\end{proposition}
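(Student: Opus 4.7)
The plan is straightforward: reduce the fixed point problem for a self-map of $Y$ to a coincidence problem for maps into $Y$, by composing with $g$.

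More concretely, suppose $(X,Y;g)$ has CP and let $\varphi:Y\to Y$ be any self-map of $Y$. I would form the composition $f=\varphi\circ g:X\to Y$ and apply the hypothesis CP to the pair $(f,g)$: this produces a point $x\in X$ with $f(x)=g(x)$, that is, $\varphi(g(x))=g(x)$. Setting $y:=g(x)\in Y$ then exhibits $y$ as a fixed point of $\varphi$. Since $\varphi$ was arbitrary, $Y$ has FPP.

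There is essentially no obstacle here; the proof is a one-line argument once one notices that precomposition with $g$ converts self-maps of $Y$ into maps $X\to Y$, so that CP can be invoked. No continuity or Hausdorff hypotheses beyond what is built into the definitions are needed, and the argument does not require any of the sectional theory developed in Section~\ref{sn}.
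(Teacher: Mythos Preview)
Your argument is correct and is essentially identical to the paper's own proof: both compose an arbitrary self-map of $Y$ with $g$, apply CP to obtain a coincidence point $x$, and observe that $y=g(x)$ is then a fixed point.
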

\begin{proof}
Let $f:Y\to Y$ be a self-map. We have a map $f\circ g:X\to Y$. Then, there is a point $x\in X$ such that $(f\circ g)(x)=g(x)$. Taking $y=g(x)$, one has $f(y)=y$. Thus, $Y$ has FPP.  
\end{proof}

The converse of Proposition~\ref{coincidence-implies-fpp} does not hold. For example, the unit disc $D^m=\{x\in\mathbb{R}^m:~\parallel x\parallel\leq 1\}$ has FPP (this is just the Brouwer's fixed point theorem) but, for any space $X$, the triple $(X,D^m;\overline{0})$ has not CP (see Example~\ref{constant-has-not-cp}).

\medskip We apply Proposition~\ref{coincidence-implies-fpp} in the case that $Y$ is a topological group. 

\begin{example}[Topological groups]\label{exam:lie-groups}
Note that no non-trivial topological group $G$ has the FPP. Thus, by Proposition~\ref{coincidence-implies-fpp}, $(X,G;g)$ has not CP, for any space $X$ and any map $g:X\to G$ (and, of course, $\mathrm{sec}\hspace{.1mm}_g(\pi_{2,1}^{G})=1$).
\end{example}

Also, we apply Proposition~\ref{coincidence-implies-fpp} in the case that $Y$ is a sphere $S^n$. 

\begin{example}[Spheres]\label{exam:spheres}
The spheres $S^n$ have not FPP. Thus, by Proposition~\ref{coincidence-implies-fpp}, $(X,S^n;g)$ has not CP, for any space $X$ and any map $g:X\to S^n$ (and, of course, $\mathrm{sec}\hspace{.1mm}_g(\pi_{2,1}^{S^n})=1$).
\end{example}

Also, we apply Proposition~\ref{coincidence-implies-fpp} in the case that $Y$ is a closed surface $\Sigma$ other than the projective plane,  $\Sigma\neq\mathbb{R}P^2$.

\begin{example}[Surfaces]\label{exam:surfaces}
We know that no closed surface $\Sigma$ (except for the projective plane, $\Sigma\neq\mathbb{R}P^2$) has the FPP. Thus, by Proposition~\ref{coincidence-implies-fpp}, $(X,\Sigma;g)$ has not CP, for any space $X$ and any map $g:X\to \Sigma$ (and, of course, $\mathrm{sec}\hspace{.1mm}_g(\pi_{2,1}^{\Sigma})=1$).
\end{example}

Before we present our Main Theorem we observe the following remark.

\begin{remark}\label{rem:nonCP-1}
  Let $X$, $Y$ be spaces and $g:X\to Y$ be a map. Note that the canonical pullback $g^\ast\left(\pi_{2,1}^Y\right):X\times_Y F(Y,2)\to X$ admits a cross-section if and only if there exists a map $f:X\to Y$ such that $f(x)\neq g(x)$ for any $x\in X$ (see Remark~\ref{lifting}). Hence, $\mathrm{sec}\hspace{.1mm}_g(\pi_{2,1}^Y)=1$ if and only if $(X,Y;g)$ has not CP.  
\end{remark}

 Thus, we have the following theorem.

\begin{theorem}[Main theorem]\label{characterizacao-ppf}
Let $X$, $Y$ be spaces where $Y$ is Hausdorff, and $g:X\to Y$ be a map. One has $(X,Y;g)$ has CP if and only if $\mathrm{sec}\hspace{.1mm}_g(\pi_{2,1}^Y)=2$.
\end{theorem}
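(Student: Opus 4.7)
The plan is to show that $\mathrm{sec}\hspace{.1mm}_g(\pi_{2,1}^Y)$ can only take the values $1$ and $2$, and then to match the case $1$ with the failure of CP and the case $2$ with CP. First I would apply Lemma~\ref{secop-pi-k-1} with $k=2$ to obtain the upper bound $\mathrm{sec}\hspace{.1mm}_g(\pi_{2,1}^Y)\le 2$; the lower bound $\mathrm{sec}\hspace{.1mm}_g(\pi_{2,1}^Y)\ge 1$ is automatic for nonempty $X$. Thus only $1$ and $2$ are possible, and the task reduces to characterising when the value equals $1$.

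The equivalence between $\mathrm{sec}\hspace{.1mm}_g(\pi_{2,1}^Y)=1$ and the failure of CP for $(X,Y;g)$ is exactly the content of Remark~\ref{rem:nonCP-1}, which I would unpack as follows. By Remark~\ref{lifting}, $\mathrm{sec}\hspace{.1mm}_g(\pi_{2,1}^Y)=1$ means that $g$ admits a global lift through $\pi_{2,1}^Y$, i.e.\ a continuous $s\colon X\to F(Y,2)$ with $\pi_{2,1}^Y\circ s=g$. Writing $s(x)=(g(x),f(x))$, the defining inequality of $F(Y,2)$ forces $f(x)\ne g(x)$ for every $x$; conversely, any continuous $f\colon X\to Y$ with $f(x)\ne g(x)$ for all $x$ produces such an $s$ (here the Hausdorffness of $Y$ ensures that $F(Y,2)$ is open in $Y^2$, so continuity of $s$ into $F(Y,2)$ follows from continuity of $(g,f)\colon X\to Y^2$). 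Hence global sections of $g^{\ast}(\pi_{2,1}^Y)$ correspond bijectively to coincidence-free maps with $g$, giving the desired equivalence.

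Combining the two observations yields the theorem: $(X,Y;g)$ has CP iff $\mathrm{sec}\hspace{.1mm}_g(\pi_{2,1}^Y)\ne 1$, which by the upper bound is equivalent to $\mathrm{sec}\hspace{.1mm}_g(\pi_{2,1}^Y)=2$. I do not expect any real obstacle here: the substantive work has been placed upstream, namely in Lemma~\ref{secop-pi-k-1} (which invokes Proposition~3.10 of \cite{zapata2020}, the only place where Hausdorffness plays a genuine role) and in Remark~\ref{rem:nonCP-1} (where the canonical pullback is identified with the space of non-coincidence data). The theorem is essentially a two-line combination of those two facts.
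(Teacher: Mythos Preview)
Your proposal is correct and follows exactly the paper's own argument: use Lemma~\ref{secop-pi-k-1} (the Key Lemma) for the upper bound $\mathrm{sec}_g(\pi_{2,1}^Y)\le 2$, use Remark~\ref{rem:nonCP-1} to identify $\mathrm{sec}_g(\pi_{2,1}^Y)=1$ with the failure of CP, and combine. One small over-justification: continuity of $s=(g,f)$ into the subspace $F(Y,2)\subset Y^2$ does not require $F(Y,2)$ to be open, only that the image lands there; Hausdorffness is genuinely used only upstream in the Key Lemma, as you correctly note.
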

\begin{proof}
Suppose that $(X,Y;g)$ has CP, then $\mathrm{sec}\hspace{.1mm}_g(\pi_{2,1}^Y)\geq 2$ (see Remark~\ref{rem:nonCP-1}) so, by the Key Lemma (Lemma~\ref{secop-pi-k-1}), $\mathrm{sec}\hspace{.1mm}_g(\pi_{2,1}^Y)=2$. Suppose now that $\mathrm{sec}\hspace{.1mm}_g(\pi_{2,1}^Y)=2$, so in particular $\mathrm{sec}\hspace{.1mm}_g(\pi_{2,1}^Y)\neq 1$. Hence, $(X,Y;g)$ has CP. 
\end{proof}

\medskip We recall the classical Fadell-Neuwirth fibration.  

\begin{lemma}[Fadell-Neuwirth fibration \cite{fadell1962configuration}] \label{TFN} Let $M$ be a connected $m$-dimensional topological manifold without boundary, where $m\geq 2$. For $k> r\geq 1$, the map $\pi_{k,r}^M:F(M,k)\to F(M,r)$  is a locally trivial bundle with fiber $F(M-Q_r, k-r)$, where $Q_r\subset M$ is a finite subset with $r$ elements. In particular, $\pi_{k,r}^M$ is a fibration.
\end{lemma}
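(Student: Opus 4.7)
The plan is to verify the two assertions in sequence: first identify the fiber, then produce explicit local trivialisations. The conclusion that $\pi_{k,r}^M$ is a Hurewicz fibration will then follow at once, since any locally trivial bundle with paracompact base is a Hurewicz fibration, and $F(M,r)$ is an open subspace of the manifold $M^r$, hence paracompact.

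For the fiber, fix $q=(q_1,\dots,q_r)\in F(M,r)$ and set $Q_r=\{q_1,\dots,q_r\}$. By definition $(\pi_{k,r}^M)^{-1}(q)$ consists of tuples of the form $(q_1,\dots,q_r,x_{r+1},\dots,x_k)$ with the coordinates $x_{r+1},\dots,x_k$ pairwise distinct and distinct from every $q_i$; projecting onto the last $k-r$ coordinates gives a canonical homeomorphism onto $F(M-Q_r,k-r)$. This step uses that $M$ is Hausdorff, so that $Q_r$ is closed and $M-Q_r$ is an open $m$-manifold.

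For local triviality at such a $q$, I would use the manifold structure to choose pairwise disjoint open Euclidean charts $(U_i,\varphi_i)$ with $\varphi_i\colon U_i\xrightarrow{\cong}\mathbb{R}^m$ and $\varphi_i(q_i)=0$. Then $V=U_1\times\cdots\times U_r$ is an open neighbourhood of $q$ in $F(M,r)$ (the disjointness guarantees that any point of $V$ really lies in the configuration space). The key construction is a continuous family of self-homeomorphisms $h_{q'}\colon M\to M$ parametrised by $q'=(q'_1,\dots,q'_r)\in V$, with the properties $h_{q'}(q_i)=q'_i$ for each $i$, $h_{q'}$ supported in $U_1\cup\cdots\cup U_r$, and $h_q=\mathrm{id}_M$. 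Standardly, one produces such $h_{q'}$ chart by chart: in each $\mathbb{R}^m$, pick a compactly supported bump function $\rho$ that equals $1$ near the origin, and set $h_{q'_i}(x)=x+\rho(x)\cdot(\varphi_i(q'_i))$ on the chart (shrinking $V$ if necessary so that the map remains a diffeomorphism); extend by the identity elsewhere and compose over $i$. The trivialisation is then
\[
\Phi\colon V\times F(M-Q_r,k-r)\longrightarrow (\pi_{k,r}^M)^{-1}(V),\quad \Phi(q',(x_{r+1},\dots,x_k))=(q'_1,\dots,q'_r,h_{q'}(x_{r+1}),\dots,h_{q'}(x_k)),
\]
with inverse $(y_1,\dots,y_k)\mapsto\bigl((y_1,\dots,y_r),(h_{(y_1,\dots,y_r)}^{-1}(y_{r+1}),\dots,h_{(y_1,\dots,y_r)}^{-1}(y_k))\bigr)$. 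Continuity of $\Phi$ and of its inverse follows from continuity of $(q',x)\mapsto h_{q'}(x)$ and of $(q',y)\mapsto h_{q'}^{-1}(y)$, which is visible from the explicit formula.

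The main technical obstacle is precisely producing the family $\{h_{q'}\}$ jointly continuously in $q'$ and in the point being moved, while keeping each $h_{q'}$ a homeomorphism of $M$; this is where the hypotheses $m\geq 2$ and \emph{without boundary} enter, since they guarantee enough Euclidean room around each $q_i$ to realise the small isotopy inside a chart. Once this is in place, fiberwise comparison shows that $\Phi$ sends $\{q'\}\times F(M-Q_r,k-r)$ bijectively onto $(\pi_{k,r}^M)^{-1}(q')$, so $\pi_{k,r}^M$ is a locally trivial bundle with fiber $F(M-Q_r,k-r)$, and the fibration statement follows from paracompactness of $F(M,r)$.
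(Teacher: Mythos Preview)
The paper does not prove this lemma at all: it is stated with a citation to Fadell--Neuwirth's original article \cite{fadell1962configuration} and used as a black box. Your write-up is essentially the classical argument from that source (choose disjoint Euclidean charts around the base configuration and push points around with compactly supported translations), so there is nothing to compare; you have supplied the standard proof that the paper merely invokes.

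One small remark on your last paragraph: the bump-function isotopy $x\mapsto x+\rho(x)\,\varphi_i(q'_i)$ is a local homeomorphism as soon as $|\varphi_i(q'_i)|\cdot\sup|\nabla\rho|<1$, which is arranged by shrinking $V$ and does not require $m\geq 2$. The hypotheses ``$m\geq 2$'' and ``without boundary'' in the lemma are not what make your local trivialisation work; rather, ``without boundary'' ensures every point has a full Euclidean neighbourhood (so the chartwise isotopy extends by the identity), while the restriction $m\geq 2$ is a convention inherited from the original paper (it guarantees, e.g., connectedness of the fiber $M-Q_r$) and is not strictly needed for local triviality itself. This does not affect correctness, but your explanation of where the hypotheses enter is slightly misattributed.
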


 From Proposition~\ref{basic-properties-relative} (see also Corollary~\ref{fibration-secat-1}) we have that if $p$ is a fibration and $\mathrm{secat}\hspace{.1mm}(g)=1$, then $\mathrm{sec}_g\hspace{.1mm}(p)=\mathrm{sec}\hspace{.1mm}(p)$. The condition $p$ is a fibration can not be relaxed. For example, for $m\geq 1$, the projection $\pi_{2,1}^{D^m}:F(D^m,2)\to D^m$ is not a fibration, because the fibre $D^m-\{0\}$ is not homotopy equivalent to the fibre $D^m-\{(1,0,\ldots,0)\}$. Moreover, given any space $X$, we have that the constant map $\overline{0}:X\to D^m$ satisfies the equality $\mathrm{secat}\hspace{.1mm}(\overline{0})=1$. Furthermore, $\mathrm{sec}\hspace{.1mm}_{\overline{0}}(\pi_{2,1}^{D^m})=1$ (see Example~\ref{constant-has-not-cp}) and $\mathrm{sec}\hspace{.1mm}(\pi_{2,1}^{D^m})=2$ (see \cite[Theorem 3.14, p. 565]{zapata2020}). 

\medskip Now, we present some conditions such that the converse of Proposition~\ref{coincidence-implies-fpp} holds.

\begin{proposition}\label{conditions-fpp-implies-cp}
  Let $X$, $Y$ be spaces where $Y$ is a connected $m$-dimensional topological manifold without boundary, where $m\geq 2$. Let $g:X\to Y$ be a map such that $\mathrm{secat}\hspace{.1mm}(g)=1$ (for example when $g$ is a homotopy equivalence). If $Y$ has FPP, then $(X,Y;g)$ has CP (and, of course, $\mathrm{sec}\hspace{.1mm}_{g}(\pi_{2,1}^{Y})=2$). In particular, we obtain that $Y$ has FPP if and only if $(X,Y;g)$ has CP.  
\end{proposition}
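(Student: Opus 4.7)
The plan is to chain together three previously established facts and appeal to the Fadell--Neuwirth fibration structure available under the manifold hypothesis. Specifically, I would use (a) the characterisation of FPP via the absolute sectional number of $\pi_{2,1}^Y$ from \cite[Theorem 3.14]{zapata2020}, (b) the invariance statement in Item~(3) of Proposition~\ref{basic-properties-relative} which, for a fibration $p$ and a map $g$ with $\mathrm{secat}(g)=1$, gives $\mathrm{sec}_g(p)=\mathrm{sec}(p)$, and (c) the Main Theorem~\ref{characterizacao-ppf} which translates $\mathrm{sec}_g(\pi_{2,1}^Y)=2$ into CP.

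First I would invoke Lemma~\ref{TFN} to observe that the hypothesis that $Y$ is a connected topological manifold without boundary of dimension $m\geq 2$ guarantees that $\pi_{2,1}^Y:F(Y,2)\to Y$ is a (locally trivial, hence Hurewicz) fibration. This is the decisive structural input: it is precisely what lets us replace $\mathrm{secat}$ by $\mathrm{sec}$ and apply the fibration clause of Proposition~\ref{basic-properties-relative}.

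Next, assuming $Y$ has FPP, the characterisation recalled just after Lemma~\ref{TFN} (namely \cite[Theorem 3.14, p.~565]{zapata2020}) yields $\mathrm{sec}(\pi_{2,1}^Y)=2$. Since $\pi_{2,1}^Y$ is a fibration and $\mathrm{secat}(g)=1$ by hypothesis, Item~(3) of Proposition~\ref{basic-properties-relative} (or equivalently Corollary~\ref{fibration-secat-1}) gives the equality $\mathrm{sec}_g(\pi_{2,1}^Y)=\mathrm{sec}(\pi_{2,1}^Y)=2$. The Main Theorem~\ref{characterizacao-ppf} (which applies because $Y$, being a manifold, is Hausdorff) then implies that $(X,Y;g)$ has CP. The converse direction is exactly Proposition~\ref{coincidence-implies-fpp}: CP for $(X,Y;g)$ forces FPP for $Y$ with no additional hypotheses. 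Combining the two directions yields the \emph{iff} stated at the end.

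The only mildly subtle point — and the one I would double-check carefully — is the applicability of the equality $\mathrm{sec}_g(p)=\mathrm{sec}(p)$: this relies on $p$ being a genuine fibration, which is exactly why the hypotheses ``manifold without boundary'' and ``$m\geq 2$'' are imposed. As noted in the remark preceding this proposition, for $Y=D^m$ the projection $\pi_{2,1}^{D^m}$ fails to be a fibration and the conclusion breaks down. So the main conceptual step is not computational but rather recognising that the manifold hypothesis is precisely what activates Fadell--Neuwirth and thereby bridges the absolute and relative sectional numbers.
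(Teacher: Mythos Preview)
Your proposal is correct and follows essentially the same route as the paper's own proof: invoke Lemma~\ref{TFN} to get that $\pi_{2,1}^Y$ is a fibration, use \cite[Theorem~3.14]{zapata2020} to convert FPP into $\mathrm{sec}(\pi_{2,1}^Y)=2$, apply Item~(3) of Proposition~\ref{basic-properties-relative} to obtain $\mathrm{sec}_g(\pi_{2,1}^Y)=2$, and conclude CP via Theorem~\ref{characterizacao-ppf}. Your explicit mention of Proposition~\ref{coincidence-implies-fpp} for the converse direction is a small but welcome addition that the paper leaves implicit.
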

\begin{proof}
 Suppose that $Y$ has FPP, then by \cite[Theorem 3.14, p. 565]{zapata2020}, $\mathrm{sec}\hspace{.1mm}(\pi_{2,1}^{Y})=2$. By Lemma~\ref{TFN}, the projection $\pi_{2,1}^{Y}:F(Y,2)\to Y$ is a fibration. Hence, by Proposition~\ref{basic-properties-relative}, $\mathrm{sec}\hspace{.1mm}_{g}(\pi_{2,1}^{Y})=\mathrm{sec}\hspace{.1mm}(\pi_{2,1}^{Y})$ (here we also use $\mathrm{secat}\hspace{.1mm}(g)=1$). Therefore, $\mathrm{sec}\hspace{.1mm}_{g}(\pi_{2,1}^{Y})=2$, and thus, by Theorem~\ref{characterizacao-ppf}, we conclude that $(X,Y;g)$ has CP.     
\end{proof}

We apply Proposition~\ref{conditions-fpp-implies-cp} to the case that $Y$ is an even-dimensional projective space $\mathbb{R}P^{2n}$. 

\begin{example}[Projective spaces]\label{exam:projective}
We recall that the odd-dimensional projective space $\mathbb{R}P^{2n+1}$ has not FPP. Thus, by Proposition~\ref{coincidence-implies-fpp}, $(X,\mathbb{R}P^{2n+1};g)$ has not CP, for any space $X$ and any map $g:X\to \mathbb{R}P^{2n+1}$ (and, of course, $\mathrm{sec}\hspace{.1mm}_g(\pi_{2,1}^{\mathbb{R}P^{2n+1}})=1$).

On the other hand, we know that even-dimensional projective spaces $\mathbb{R}P^{2n}$ have FPP. Thus, by Proposition~\ref{conditions-fpp-implies-cp}, $(X,\mathbb{R}P^{2n};g)$ has CP for any space $X$ and any map $g:X\to \mathbb{R}P^{2n}$ with $\mathrm{secat}(g)=1$ (and, of course,  $\mathrm{sec}\hspace{.1mm}_g(\pi_{2,1}^{\mathbb{R}P^{2n}})=2$). Analogous facts hold for complex and quaternionic projective spaces.
\end{example}

\subsection{Maps with codomain $D^m$} Given $m\geq 1$, $X$ a space and $g:X\to D^m$ a map, note that $(X,D^m;g)$ has not CP if and only if there is a map $\varphi:X\to S^{m-1}$ such that $\varphi(x)=g(x)$ for any $x\in g^{-1}(S^{m-1})$ (cf. \cite[Proposition 1.1, p. 148]{holsztynski1969}).
\begin{eqnarray*}
\xymatrix{ X\ar[rd]^{\,\,\varphi} &  & \\
       g^{-1}(S^{m-1}) \ar@{^{(}->}[u]_{i} \ar[r]_{\,\,g_|}&  S^{m-1} &}
\end{eqnarray*}
Indeed, suppose that $(X,D^m;g)$ has not CP, that is, there is a map $f:X\to D^m$ such that $f(x)\neq g(x)$ for any $x\in X$. Then, take $\varphi:X\to S^{m-1}$ given by $\varphi(x)=f(x)+t\left(g(x)-f(x)\right)$ where $t\geq 0$ such that $f(x)+t\left(g(x)-f(x)\right)\in S^{m-1}$. Now, suppose that there is a map $\varphi:X\to S^{m-1}$ such that $\varphi(x)=g(x)$ for any $x\in g^{-1}(S^{m-1})$. Take $f(x)=-\varphi(x)$ for any $x\in X$. Note that $f(x)\neq g(x)$ for any $x\in X$. Hence, $(X,D^m;g)$ has not CP. 

\medskip As mentioned at the beginning of the introduction, $g:X\to D^m$ being universal means, in our terminology, that $(X,D^m;g)$ has CP. Therefore, the following statement holds.

\begin{proposition}\label{cohomology-cp}(cf. \cite[Proposition 1.1, p. 148]{holsztynski1969})
  Let $m\geq 1$, $X$ be a space, and $g:X\to D^m$ be a map. If for a cohomology theory $h^\ast$ we have \begin{equation*}
 \left(g_|\right)^\ast(e^{m-1})\not\in\mathrm{Im}\left(i^\ast:h^\ast(X)\to h^\ast\left(g^{-1}(S^{m-1})\right)\right)   
\end{equation*} where $i:g^{-1}(S^{m-1})\hookrightarrow X$ is the inclusion map and $e^{m-1}\neq 0$ is an element of $h^{m-1}(S^{m-1})$, then $(X,D^m;g)$ has CP (and, of course, $\mathrm{sec}\hspace{.1mm}_g(\pi_{2,1}^{D^m})=2$).  
\end{proposition}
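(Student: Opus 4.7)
\medskip
\noindent\textbf{Proof proposal.} The plan is to argue by contrapositive, leveraging the characterisation established in the paragraph immediately preceding the statement: namely, that $(X,D^m;g)$ fails to have CP precisely when the restriction $g_|:g^{-1}(S^{m-1})\to S^{m-1}$ extends to a map $X\to S^{m-1}$. Once this extension exists, functoriality of $h^\ast$ forces $(g_|)^\ast(e^{m-1})$ to lie in the image of $i^\ast$, which contradicts the hypothesis.

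More concretely, first I would suppose that $(X,D^m;g)$ does \emph{not} have CP. By the discussion just before the proposition, this yields a map $\varphi:X\to S^{m-1}$ with the property that $\varphi(x)=g(x)$ for every $x\in g^{-1}(S^{m-1})$. In other words, the triangle
\begin{eqnarray*}
\xymatrix{ X\ar[rd]^{\,\,\varphi} &  & \\
       g^{-1}(S^{m-1}) \ar@{^{(}->}[u]^{i} \ar[r]_{\,\,g_|}&  S^{m-1} &}
\end{eqnarray*}
commutes strictly, i.e.\ $\varphi\circ i=g_|$.

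Next, I would apply the contravariant functor $h^\ast$ to this commutative triangle, obtaining $i^\ast\circ\varphi^\ast=(g_|)^\ast$ as homomorphisms $h^\ast(S^{m-1})\to h^\ast(g^{-1}(S^{m-1}))$. Evaluating at the distinguished class $e^{m-1}\in h^{m-1}(S^{m-1})$ yields
\[
(g_|)^\ast(e^{m-1})=i^\ast\bigl(\varphi^\ast(e^{m-1})\bigr)\in\mathrm{Im}\bigl(i^\ast:h^\ast(X)\to h^\ast(g^{-1}(S^{m-1}))\bigr),
\]
contradicting the assumption. Therefore $(X,D^m;g)$ must have CP. Finally, invoking the Main Theorem (Theorem~\ref{characterizacao-ppf}) gives $\mathrm{sec}\hspace{.1mm}_g(\pi_{2,1}^{D^m})=2$, completing the proof.

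There is essentially no hard step here: the proposition is a clean cohomological obstruction argument that packages the preceding non-extension criterion. The only mild subtlety is to ensure the equality $\varphi\circ i=g_|$ is genuinely strict (not merely up to homotopy), so that $h^\ast$ can be applied without any homotopy invariance hypothesis on the cohomology theory; this is guaranteed by the explicit geometric construction of $\varphi$ given in the discussion preceding the statement.
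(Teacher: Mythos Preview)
Your proof is correct and follows essentially the same contrapositive argument as the paper: assume $(X,D^m;g)$ lacks CP, use the preceding characterisation to obtain the extension $\varphi:X\to S^{m-1}$ making the triangle commute, apply $h^\ast$, and conclude that $(g_|)^\ast(e^{m-1})\in\mathrm{Im}(i^\ast)$. Your write-up is in fact slightly more explicit than the paper's (which omits the evaluation step and the appeal to Theorem~\ref{characterizacao-ppf} for the parenthetical claim), but the strategy is identical.
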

\begin{proof}
    Suppose that $(X,D^m;g)$ has not CP. Then, we have the following commutative diagram
 \begin{eqnarray*}
\xymatrix@C=3cm{ X\ar[rd]^{} &   \\
       g^{-1}(S^{m-1}) \ar@{^{(}->}[u]_{i} \ar[r]_{\,\,g_|}&  S^{m-1} }.
\end{eqnarray*} It implies the following commutative diagram  
\begin{eqnarray*}
\xymatrix@C=3cm{ h^\ast\left(X\right)\ar[d]_{i^\ast} &   \\
       h^\ast\left(g^{-1}(S^{m-1})\right) &  h^\ast\left(S^{m-1}\right) \ar[l]^{\left(g_|\right)^\ast} \ar[lu]^{} }.
\end{eqnarray*} Thus, $\left(g_|\right)^\ast(e^{m-1})\in\mathrm{Im}\left(i^\ast\right)$.
\end{proof}


\section{Relative topological complexity of a map}\label{tc-map}
In this section, we introduce and study the notion of relative topological complexity of a map, along with basic results about this numerical invariant. This is motivated by the coincidence property.

\medskip Recall that $Z^{[0,1]}$ denotes the space of all paths $\gamma: [0,1] \to Z$ in $Z$ and  $e^Z_{2}: Z^{[0,1]} \to Z \times Z$ denotes the fibration associating to any path $\gamma\in Z^{[0,1]}$ the pair of its initial and end points $e^Z_{2}(\gamma)=(\gamma(0),\gamma(1))$. Equip the path space $Z^{[0,1]}$ with the compact-open topology. 

Let $f:Z\to Y$ be a map, and let $e_f:Z^{[0,1]}\to Z\times Y,~e_f=(1_Z\times f)\circ e^Z_{2}$.
\begin{eqnarray*}
\xymatrix{ Z^{[0,1]} \ar[rd]^{e^Z_{2}}\ar[dd]_{e_f}  &  \\
& Z\times Z\ar[ld]_{1_Z\times f} \\
       Z\times Y  &  }
\end{eqnarray*}

\medskip For maps $f:Z\to Y$ and $g:X\to Y$, \begin{eqnarray*}
\xymatrix{  & Z \ar[d]^{f} & \\
       X  \ar[r]_{\,\, g} &  Y &}
\end{eqnarray*} we consider the canonical pullback 
\begin{eqnarray*}
\xymatrix{ \left(Z\times X\right)\times_{Z\times Y} Z^{[0,1]} \ar[r]^{\,\,} \ar[d]_{\left(1_Z\times g\right)^\ast(e_f)} & Z^{[0,1]} \ar[d]^{e_f} \\
       Z\times X \ar[r]_{1_Z\times g} &  Z\times Y}
\end{eqnarray*} 

Thus, we introduce the following definition.

\begin{definition}[Relative topological complexity of a map]\label{defn:relative-tc}
The \textit{topological complexity} of the map $f$ \textit{relative} to $g$, denoted by TC$_{g}(f)$, is the sectional number $\mathrm{sec}_{1_Z\times g}\hspace{.1mm}(e_{f})$ of the map $e_{f}$ relative to $1_Z\times g$. We can consider the homotopy version of this invariant. The \textit{homotopy topological complexity} of the map $f$ \textit{relative} to $g$, denoted by HTC$_{g}(f)$, is the sectional category $\text{secat}_{1_Z\times g}\hspace{.1mm}(e_{f})$ of the map $e_{f}$ relative to $1_Z\times g$.
\end{definition}

Since controlling the work map only up to homotopy makes the results harder to interpret in terms of actual motion plannings, one may use the strong notion rather than the homotopic one in the above definition. 

\begin{remark}
    The relative topological complexity $\mathrm{TC}_{1_Y}\hspace{.1mm}(f)$ coincides with the topological complexity of $f$, $\mathrm{TC}\hspace{.1mm}(f)$, in the sense of \cite{zapata2020}, \cite{pavesic2018} or \cite{pavesic2017}. Similarly, the homotopy relative topological complexity $\mathrm{HTC}_{1_Y}\hspace{.1mm}(f)$ coincides with the homotopy topological complexity of $f$, $\mathrm{HTC}\hspace{.1mm}(f)$, in the sense of \cite{zapata2020}. Recall that the topological complexity $\mathrm{TC}\hspace{.1mm}(f)$ is the sectional number $\mathrm{sec}\hspace{.1mm}(e_f)$ of the map $e_f$, and the homotopy topological complexity $\mathrm{HTC}\hspace{.1mm}(f)$ is the sectional category $\mathrm{secat}\hspace{.1mm}(e_f)$ of the map $e_f$. Furthermore, $\mathrm{TC}(1_X)$ is the famous Farber's topological complexity of $X$ which is denoted by $\mathrm{TC}(X)$, see \cite{farber2003topological}.
\end{remark}

Several different definitions of topological complexity of maps exist in the literature, for instance, \cite{pavesic2019}, \cite{murillo2019}, \cite{rudyak2022}, and \cite{scott2022}. However, as shown in \cite[Corollary 3.25, p. 1635]{zapata2022}, C. A. I. Zapata and J.~Gonz\'{a}lez unify the previous notions. In the next statement, we present lower and upper bounds for the relative topological complexity of a map, which also gives a relationship between $\mathrm{TC}_g\hspace{.1mm}(f)$ and $\mathrm{TC}\hspace{.1mm}(f)$.

\begin{theorem}[Lower and upper bounds]\label{relatice-tc-relatice-sec}
For maps $f:Z\to Y$ and $g:X\to Y$, we have \[\max\{\mathrm{TC}\hspace{.1mm}(f)/\mathrm{sec}(g),\mathrm{sec}_{g}\hspace{.1mm}(f)\}\leq\mathrm{TC}_g\hspace{.1mm}(f)\leq\min\{\mathrm{TC}\hspace{.1mm}(f),\mathrm{sec}_{g}\hspace{.1mm}(f)\cdot\mathrm{TC}\hspace{.1mm}(Z)\}.\]
\end{theorem}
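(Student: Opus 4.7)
My plan is to derive all four inequalities from two structural observations about $\mathrm{TC}_g(f)=\mathrm{sec}_{1_Z\times g}(e_f)$, exploiting the factorization $e_f=(1_Z\times f)\circ e_2^Z$ recorded just before the definition. First, I will apply Proposition~\ref{composition} to $p_1=e_2^Z:Z^{[0,1]}\to Z\times Z$, $p_2=1_Z\times f:Z\times Z\to Z\times Y$, and the base map $1_Z\times g:Z\times X\to Z\times Y$; this yields the double inequality
\[
\mathrm{sec}_{1_Z\times g}(1_Z\times f)\;\leq\;\mathrm{TC}_g(f)\;\leq\;\mathrm{sec}_{1_Z\times g}(1_Z\times f)\cdot\mathrm{sec}(e_2^Z).
\]
Since $\mathrm{sec}(e_2^Z)=\mathrm{TC}(Z)$ by definition, the only remaining task for the ``middle'' inequality is to identify $\mathrm{sec}_{1_Z\times g}(1_Z\times f)$ with $\mathrm{sec}_g(f)$.

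For that identification, I will describe the canonical pullback explicitly: the fiber product $(Z\times X)\times_{Z\times Y}(Z\times Z)$ consists of tuples $(z,x,z')$ with $g(x)=f(z')$, which is naturally homeomorphic to $Z\times(X\times_Y Z)$, and the structural projection to $Z\times X$ is exactly $1_Z\times g^{*}(f)$. Proposition~\ref{secat-sec-produc} therefore gives $\mathrm{sec}_{1_Z\times g}(1_Z\times f)=\mathrm{sec}(g^{*}(f))=\mathrm{sec}_g(f)$. Plugging this back into the previous double inequality establishes both $\mathrm{sec}_g(f)\leq\mathrm{TC}_g(f)$ and $\mathrm{TC}_g(f)\leq\mathrm{sec}_g(f)\cdot\mathrm{TC}(Z)$, which are the ``inner'' bound of the max and the ``inner'' bound of the min, respectively.

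For the two remaining inequalities, I will use the general properties of relative sectional numbers collected in Proposition~\ref{basic-properties-relative}. The upper bound $\mathrm{TC}_g(f)\leq\mathrm{TC}(f)$ is just Item~(1) applied to $p=e_f$, giving $\mathrm{sec}_{1_Z\times g}(e_f)\leq\mathrm{sec}(e_f)=\mathrm{TC}(f)$. For the lower bound $\mathrm{TC}(f)/\mathrm{sec}(g)\leq\mathrm{TC}_g(f)$, I apply Item~(3) to the same $p=e_f$ together with the base map $1_Z\times g$, producing
\[
\mathrm{sec}_{1_Z\times g}(e_f)\cdot\mathrm{sec}(1_Z\times g)\;\geq\;\mathrm{sec}(e_f).
\]
Proposition~\ref{secat-sec-produc} again converts $\mathrm{sec}(1_Z\times g)$ into $\mathrm{sec}(g)$, and rearranging yields the desired inequality.

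The only non-routine step is the explicit identification of the canonical pullback of $1_Z\times f$ along $1_Z\times g$ as a product $Z\times(X\times_Y Z)$ with projection $1_Z\times g^{*}(f)$; once this is verified, all four bounds reduce to direct invocations of Propositions~\ref{basic-properties-relative}, \ref{secat-sec-produc}, and \ref{composition}. I expect this bookkeeping to be the main hurdle, because one has to keep straight which map plays the role of the fibration and which plays the role of the base map in each appeal to the general theory.
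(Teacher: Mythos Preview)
Your proposal is correct and follows essentially the same route as the paper's own proof: both arguments apply Proposition~\ref{composition} to the factorization $e_f=(1_Z\times f)\circ e_2^Z$ with base map $1_Z\times g$, identify $\mathrm{sec}_{1_Z\times g}(1_Z\times f)$ with $\mathrm{sec}_g(f)$ via the pullback description and Proposition~\ref{secat-sec-produc}, and obtain the remaining two bounds from Items~(1) and~(3) of Proposition~\ref{basic-properties-relative} together with Proposition~\ref{secat-sec-produc}. Your write-up is actually slightly more explicit than the paper's in spelling out the homeomorphism $(Z\times X)\times_{Z\times Y}(Z\times Z)\cong Z\times(X\times_Y Z)$, but the underlying argument is identical.
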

\begin{proof}
By definition together with Proposition~\ref{basic-properties-relative}, we have that
\begin{align*}
    \mathrm{TC}_g\hspace{.1mm}(f)&=\mathrm{sec}_{1_Z\times g}\hspace{.1mm}(e_{f})\\
    &\leq\mathrm{sec}\hspace{.1mm}(e_{f})\\
    &=\mathrm{TC}\hspace{.1mm}(f).
\end{align*}
On the other hand, by Proposition~\ref{composition}:
\begin{align*}
    \mathrm{TC}_g\hspace{.1mm}(f)&=\mathrm{sec}_{1_Z\times g}\hspace{.1mm}(e_{f})\\
    &\geq \mathrm{sec}_{1_Z\times g}\hspace{.1mm}(1_Z\times f);\\
    \mathrm{TC}_g\hspace{.1mm}(f)&=\mathrm{sec}_{1_Z\times g}\hspace{.1mm}(e_{f})\\
    &\leq \mathrm{sec}_{1_Z\times g}\hspace{.1mm}(1_Z\times f)\cdot \mathrm{sec}\hspace{.1mm}(e_2^Z)\\
    &= \mathrm{sec}_{1_Z\times g}\hspace{.1mm}(1_Z\times f)\cdot \mathrm{TC}\hspace{.1mm}(Z).\\
\end{align*}
 Note that $\mathrm{sec}_{1_Z\times g}\hspace{.1mm}(1_Z\times f)=\mathrm{sec}\hspace{.1mm}\left(1_Z\times g^\ast(f)\right)$. Moreover, by Proposition~\ref{secat-sec-produc}, $\mathrm{sec}\hspace{.1mm}\left(1_Z\times g^\ast(f)\right)=\mathrm{sec}\hspace{.1mm}\left(g^\ast(f)\right)$. Hence, $\mathrm{sec}_{1_Z\times g}\hspace{.1mm}(1_Z\times f)=\mathrm{sec}_g\hspace{.1mm}\left(f\right)$. Therefore, $\mathrm{sec}_{g}\hspace{.1mm}(f)\leq\mathrm{TC}_g\hspace{.1mm}(f)\leq \mathrm{sec}_{g}\hspace{.1mm}(f)\cdot\mathrm{TC}\hspace{.1mm}(Z)$.

\medskip Furthermore, by Proposition~\ref{basic-properties-relative}(3) together with Proposition~\ref{secat-sec-produc}, we have the inequality $\mathrm{TC}_g\hspace{.1mm}(f)\cdot \mathrm{sec}(g)\geq \mathrm{TC}\hspace{.1mm}(f)$.
\end{proof}

Theorem~\ref{relatice-tc-relatice-sec} implies the following statement.

\begin{corollary}\label{contractible-relative-tc}
    Let  $f:Z\to Y$ and $g:X\to Y$ be maps.
    \begin{enumerate}
        \item If $Z$ is contractible, then $\mathrm{TC}_g\hspace{.1mm}(f)=\mathrm{sec}_{g}\hspace{.1mm}(f)$.
        \item If $\mathrm{sec}_{g}\hspace{.1mm}(f)=1$, then $\mathrm{TC}_g\hspace{.1mm}(f)\leq\min\{\mathrm{TC}\hspace{.1mm}(f),\mathrm{TC}\hspace{.1mm}(Z)\}$.
        \item If $\mathrm{sec}\hspace{.1mm}(g)=1$, then $\mathrm{TC}_g\hspace{.1mm}(f)=\mathrm{TC}\hspace{.1mm}(f)$.
    \end{enumerate}
\end{corollary}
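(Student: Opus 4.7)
The plan is to derive all three items as immediate consequences of the two-sided bound established in Theorem~\ref{relatice-tc-relatice-sec}, namely
\[
\max\{\mathrm{TC}(f)/\mathrm{sec}(g),\,\mathrm{sec}_{g}(f)\}\ \leq\ \mathrm{TC}_g(f)\ \leq\ \min\{\mathrm{TC}(f),\,\mathrm{sec}_{g}(f)\cdot\mathrm{TC}(Z)\}.
\]
The only genuine ingredient beyond substitution is the classical fact that $\mathrm{TC}(Z)=1$ when $Z$ is contractible. To justify it I would note that $\mathrm{TC}(Z)=\mathrm{TC}(1_Z)=\mathrm{sec}(e_2^Z)$, and a contracting homotopy $H\colon Z\times[0,1]\to Z$ from $1_Z$ to a constant map provides a global section of $e_2^Z$ via $(z,z')\mapsto H(z,\cdot)\ast H(z',\cdot)^{-1}$, so a single open set (namely $Z\times Z$) suffices.

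For item (1), I would feed $\mathrm{TC}(Z)=1$ into the upper bound, obtaining $\mathrm{TC}_g(f)\leq\mathrm{sec}_g(f)$, and combine with the lower bound $\mathrm{sec}_g(f)\leq\mathrm{TC}_g(f)$ to conclude equality. For item (2), the hypothesis $\mathrm{sec}_g(f)=1$ turns the upper bound directly into $\mathrm{TC}_g(f)\leq\min\{\mathrm{TC}(f),\mathrm{TC}(Z)\}$, which is the claim. For item (3), the hypothesis $\mathrm{sec}(g)=1$ makes the lower bound read $\mathrm{TC}(f)\leq\mathrm{TC}_g(f)$, and this combined with the universal upper bound $\mathrm{TC}_g(f)\leq\mathrm{TC}(f)$ forces equality. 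There is no substantive obstacle: the corollary is a formal consequence of Theorem~\ref{relatice-tc-relatice-sec}, with the sole auxiliary remark being the evaluation $\mathrm{TC}(Z)=1$ for contractible $Z$.
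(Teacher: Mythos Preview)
Your proposal is correct and follows precisely the paper's approach: the corollary is stated there as an immediate consequence of Theorem~\ref{relatice-tc-relatice-sec}, and each of your three substitutions into the two-sided bound is exactly what is intended. The only additional content you supply is the standard justification that $\mathrm{TC}(Z)=1$ for contractible $Z$, which the paper takes for granted.
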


As a direct application of Corollary~\ref{contractible-relative-tc}(1) together with Theorem~\ref{characterizacao-ppf} we have that CP can be characterized in terms of relative TC.

\begin{proposition}\label{prop:cp-implies-value-relative-tc}
    Let $X,Y$ be spaces where $Y$ is Hausdorff and $F(Y,2)$ is contractible. Let $g:X\to Y$ be a map. Then $(X,Y;g)$ has CP if and only if  $\mathrm{TC}_g\hspace{.1mm}(\pi_{2,1}^Y)=2$. 
\end{proposition}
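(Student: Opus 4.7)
The plan is to reduce the statement to Theorem~\ref{characterizacao-ppf} by exploiting the contractibility hypothesis on $F(Y,2)$. First, I would identify the map to which Corollary~\ref{contractible-relative-tc} should be applied: taking $f=\pi_{2,1}^Y\colon F(Y,2)\to Y$ in the notation of Definition~\ref{defn:relative-tc}, the role of the space $Z$ is played by $F(Y,2)$, which is contractible by hypothesis. Item~(1) of Corollary~\ref{contractible-relative-tc} therefore yields the equality
\[
\mathrm{TC}_g\hspace{.1mm}(\pi_{2,1}^Y)=\mathrm{sec}_g\hspace{.1mm}(\pi_{2,1}^Y).
\]

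With this equality in hand, the proposition reduces to showing that $(X,Y;g)$ has CP if and only if $\mathrm{sec}_g(\pi_{2,1}^Y)=2$, which is exactly the content of the Main Theorem~\ref{characterizacao-ppf} (for which the Hausdorff assumption on $Y$ is precisely the hypothesis needed). Chaining the two equivalences then gives the desired characterisation.

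Since both ingredients are already established, I do not expect any genuine obstacle: the entire argument is a one-line substitution, and the only care required is to verify that the hypotheses of each invoked result (Hausdorffness of $Y$ for Theorem~\ref{characterizacao-ppf}, and contractibility of the source $F(Y,2)$ for Corollary~\ref{contractible-relative-tc}) are precisely those assumed in the statement of the proposition.
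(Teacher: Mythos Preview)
Your proposal is correct and matches the paper's own proof exactly: the paper simply invokes Item~(1) of Corollary~\ref{contractible-relative-tc} (using the contractibility of $F(Y,2)$) to identify $\mathrm{TC}_g(\pi_{2,1}^Y)$ with $\mathrm{sec}_g(\pi_{2,1}^Y)$, and then applies Theorem~\ref{characterizacao-ppf}. No additional ideas are needed.
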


\medskip Let $\mathbb{R}^\infty$ be the space of infinite sequences of real numbers with only finitely many non-zero entries. Note that $F(\mathbb{R}^\infty,2)$ is contractible. Thus, Proposition~\ref{prop:cp-implies-value-relative-tc} implies the following example.

\begin{example}\label{exam:rpinfty}
  Let $X$ be a space, and $g:X\to \mathbb{R}^\infty$ be a map. One has $\mathrm{TC}_g\hspace{.1mm}(\pi_{2,1}^{\mathbb{R}^\infty})=2$ if and only if  $(X,\mathbb{R}^\infty;g)$ has CP.
\end{example}

Corollary~\ref{contractible-relative-tc}(1) together with Remark~\ref{rem:nonCP-1} implies the following example.

\begin{example}\label{exam:nonCP-relativeTC}
  Let $X,Y$ be spaces where $Y$ is Hausdorff and $F(Y,2)$ is contractible. Let $g:X\to Y$ be a map. One has $(X,Y;g)$ has not CP if and only if  $\mathrm{TC}_g\hspace{.1mm}(\pi_{2,1}^Y)=1$. 
\end{example}

Also, Theorem~\ref{relatice-tc-relatice-sec} together with Proposition~\ref{basic-properties-relative}(3) imply the following statement.

\begin{proposition}\label{secat-1-relative-tc}
   Let $f:Z\to Y$ and $g:X\to Y$ be maps. If $\mathrm{secat}\hspace{.1mm}(g)=1$, then $\mathrm{HTC}\hspace{.1mm}(f)\leq\mathrm{TC}_g\hspace{.1mm}(f)\leq\mathrm{TC}\hspace{.1mm}(f)$.   
\end{proposition}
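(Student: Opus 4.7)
The plan is to prove each inequality independently. The upper bound $\mathrm{TC}_g\hspace{.1mm}(f)\leq\mathrm{TC}\hspace{.1mm}(f)$ is already contained in Theorem~\ref{relatice-tc-relatice-sec}, which was stated without requiring the hypothesis $\mathrm{secat}\hspace{.1mm}(g)=1$. So only the lower bound $\mathrm{HTC}\hspace{.1mm}(f)\leq\mathrm{TC}_g\hspace{.1mm}(f)$ demands a genuine argument, and the assumption $\mathrm{secat}\hspace{.1mm}(g)=1$ enters exactly here.

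For the lower bound, I would assemble three previously established pieces. First, from the definitions and the general comparison $\mathrm{secat}\leq\mathrm{sec}$, one has the trivial estimate
\[\mathrm{TC}_g\hspace{.1mm}(f)=\mathrm{sec}_{1_Z\times g}\hspace{.1mm}(e_f)\geq\mathrm{secat}_{1_Z\times g}\hspace{.1mm}(e_f).\]
Second, I would invoke Proposition~\ref{secat-sec-produc} applied to the map $g:X\to Y$ with the space $Z$, which gives $\mathrm{secat}\hspace{.1mm}(1_Z\times g)=\mathrm{secat}\hspace{.1mm}(g)=1$ under our hypothesis. Third, I would apply Item~(3) of Proposition~\ref{basic-properties-relative} to the canonical pullback defining $\mathrm{secat}_{1_Z\times g}\hspace{.1mm}(e_f)$, namely
\[\mathrm{secat}_{1_Z\times g}\hspace{.1mm}(e_f)\cdot\mathrm{secat}\hspace{.1mm}(1_Z\times g)\geq\mathrm{secat}\hspace{.1mm}(e_f)=\mathrm{HTC}\hspace{.1mm}(f).\]
Substituting $\mathrm{secat}\hspace{.1mm}(1_Z\times g)=1$ into this product inequality yields $\mathrm{secat}_{1_Z\times g}\hspace{.1mm}(e_f)\geq\mathrm{HTC}\hspace{.1mm}(f)$, and chaining with the first estimate gives $\mathrm{TC}_g\hspace{.1mm}(f)\geq\mathrm{HTC}\hspace{.1mm}(f)$.

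There is no real obstacle here: the result is a clean assembly of Proposition~\ref{basic-properties-relative}, Proposition~\ref{secat-sec-produc}, and the definitional bound $\mathrm{secat}\leq\mathrm{sec}$. The only point worth flagging is that the hypothesis $\mathrm{secat}\hspace{.1mm}(g)=1$ is used solely to collapse the factor $\mathrm{secat}\hspace{.1mm}(1_Z\times g)$ in the product inequality; without this hypothesis one would obtain the weaker bound $\mathrm{TC}_g\hspace{.1mm}(f)\cdot\mathrm{secat}\hspace{.1mm}(g)\geq\mathrm{HTC}\hspace{.1mm}(f)$, which is consistent with the upper-bound side of Theorem~\ref{relatice-tc-relatice-sec}.
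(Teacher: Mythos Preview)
Your proof is correct and follows essentially the same route as the paper's own argument: the paper writes the chain $\mathrm{TC}_g\hspace{.1mm}(f)\geq\mathrm{HTC}_g\hspace{.1mm}(f)\geq\mathrm{HTC}\hspace{.1mm}(f)/\mathrm{secat}\hspace{.1mm}(g)$, where $\mathrm{HTC}_g\hspace{.1mm}(f)=\mathrm{secat}_{1_Z\times g}\hspace{.1mm}(e_f)$ is exactly your intermediate quantity, and the second inequality is precisely your combination of Proposition~\ref{secat-sec-produc} with Item~(3) of Proposition~\ref{basic-properties-relative}. Your version is simply more explicit about which propositions are being invoked.
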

\begin{proof}
One has \begin{align*}
   \mathrm{TC}_g\hspace{.1mm}(f)&\geq \mathrm{HTC}_g\hspace{.1mm}(f)\\
   &\geq \mathrm{HTC}\hspace{.1mm}(f)/\mathrm{secat}\hspace{.1mm}(g)\\
\end{align*} and thus $\mathrm{TC}_g\hspace{.1mm}(f)\geq \mathrm{HTC}\hspace{.1mm}(f)$.
\end{proof}

As a consequence of Proposition~\ref{secat-1-relative-tc} one has 

\begin{corollary}\label{fibration-secat-1-relatice-tc}
   Let $f:Z\to Y$ be a fibration and $g:X\to Y$ be a map such that $\mathrm{secat}\hspace{.1mm}(g)=1$. Then $\mathrm{TC}_g\hspace{.1mm}(f)=\mathrm{TC}\hspace{.1mm}(f)$.  
\end{corollary}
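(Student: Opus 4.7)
The plan is to sandwich $\mathrm{TC}_g(f)$ between equal quantities using the already-established Proposition~\ref{secat-1-relative-tc}, the key observation being that under the fibration hypothesis the strong and homotopy versions of $\mathrm{TC}$ agree. Concretely, Proposition~\ref{secat-1-relative-tc} gives
\begin{equation*}
\mathrm{HTC}\hspace{.1mm}(f)\leq\mathrm{TC}_g\hspace{.1mm}(f)\leq\mathrm{TC}\hspace{.1mm}(f),
\end{equation*}
so it suffices to verify the outer equality $\mathrm{HTC}(f)=\mathrm{TC}(f)$.

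First I would recall from the construction preceding Definition~\ref{defn:relative-tc} that $e_f=(1_Z\times f)\circ e^Z_2$. Since $e^Z_2:Z^{[0,1]}\to Z\times Z$ is a fibration, and the product of a fibration with an identity is again a fibration (so $1_Z\times f:Z\times Z\to Z\times Y$ is a fibration as soon as $f$ is), and a composition of fibrations is a fibration, the map $e_f$ is a fibration. Then, from the observation made right after Definition~\ref{sec-secat-def}, $\mathrm{sec}(p)=\mathrm{secat}(p)$ for any fibration $p$, hence
\begin{equation*}
\mathrm{TC}\hspace{.1mm}(f)=\mathrm{sec}\hspace{.1mm}(e_f)=\mathrm{secat}\hspace{.1mm}(e_f)=\mathrm{HTC}\hspace{.1mm}(f).
\end{equation*}
Plugging into the sandwich above yields $\mathrm{TC}_g(f)=\mathrm{TC}(f)$.

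An alternative (essentially equivalent) route, which I would mention for completeness, avoids passing through the homotopy version: by Proposition~\ref{secat-sec-produc} the hypothesis $\mathrm{secat}(g)=1$ gives $\mathrm{secat}(1_Z\times g)=\mathrm{secat}(g)=1$; since $e_f$ is a fibration, Item~(3) of Proposition~\ref{basic-properties-relative} applied to the pullback defining $\mathrm{TC}_g(f)$ yields directly $\mathrm{sec}_{1_Z\times g}(e_f)=\mathrm{sec}(e_f)$, i.e., $\mathrm{TC}_g(f)=\mathrm{TC}(f)$.

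The main (and only) obstacle is checking that $e_f$ is a fibration when $f$ is, which comes down to the standard facts that fibrations are closed under products with identities and under composition; after that, every step is a direct appeal to results already proven in the paper.
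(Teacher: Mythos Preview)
Your argument is correct and follows exactly the route the paper intends: the corollary is stated as an immediate consequence of Proposition~\ref{secat-1-relative-tc}, and the missing ingredient is precisely that $e_f=(1_Z\times f)\circ e_2^Z$ is a fibration when $f$ is (so that $\mathrm{HTC}(f)=\mathrm{TC}(f)$ collapses the sandwich). Your alternative via Proposition~\ref{secat-sec-produc} and Item~(3) of Proposition~\ref{basic-properties-relative} is equally valid and is in fact the same mechanism unpacked one level deeper.
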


\begin{example}\label{exam:relativeTC-TC}
    Let $X$, $Y$ be spaces where $Y$ is a connected $m$-dimensional topological manifold without boundary, and $m\geq 2$. Let $g:X\to Y$ be a map with $\mathrm{secat}\hspace{.1mm}(g)=1$ (for example when $g$ is a homotopy equivalence). One has, by Corollary~\ref{fibration-secat-1-relatice-tc}, $\mathrm{TC}_g\hspace{.1mm}(\pi_{2,1}^Y)=\mathrm{TC}\hspace{.1mm}(\pi_{2,1}^Y)$.  
\end{example}

In addition, Theorem~\ref{relatice-tc-relatice-sec} together with Theorem~\ref{characterizacao-ppf} implies the following statement.

\begin{proposition}\label{cp-relative-tc}
 Let $X,Y$ be spaces where $Y$ is Hausdorff. Let $g:X\to Y$ be a map. If $(X,Y;g)$ has CP, then \[2\leq\mathrm{TC}_g(\pi_{2,1}^Y)\leq\min\{\mathrm{TC}(\pi_{2,1}^Y),2\mathrm{TC}(F(Y,2))\}.\]   
\end{proposition}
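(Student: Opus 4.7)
The plan is to derive the statement as a direct corollary by chaining two previously established results: the double inequality from Theorem~\ref{relatice-tc-relatice-sec} and the exact value supplied by Theorem~\ref{characterizacao-ppf}. No fundamentally new argument is required, so the proof will be a short bookkeeping substitution.

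Step one is to specialise Theorem~\ref{relatice-tc-relatice-sec} to the case $f=\pi_{2,1}^Y\colon F(Y,2)\to Y$ with the given $g\colon X\to Y$. With the source space $Z$ there playing the role of $F(Y,2)$, the theorem reads
\[
\max\bigl\{\mathrm{TC}(\pi_{2,1}^Y)/\mathrm{sec}(g),\,\mathrm{sec}_g(\pi_{2,1}^Y)\bigr\}\leq\mathrm{TC}_g(\pi_{2,1}^Y)\leq\min\bigl\{\mathrm{TC}(\pi_{2,1}^Y),\,\mathrm{sec}_g(\pi_{2,1}^Y)\cdot\mathrm{TC}(F(Y,2))\bigr\}.
\]

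Step two is to invoke the CP hypothesis. Since $Y$ is Hausdorff, Theorem~\ref{characterizacao-ppf} applies and yields $\mathrm{sec}_g(\pi_{2,1}^Y)=2$. Substituting this value into both the lower and upper bounds, and then weakening the lower bound by discarding the (possibly larger) term $\mathrm{TC}(\pi_{2,1}^Y)/\mathrm{sec}(g)$, produces exactly the desired chain $2\leq\mathrm{TC}_g(\pi_{2,1}^Y)\leq\min\{\mathrm{TC}(\pi_{2,1}^Y),\,2\,\mathrm{TC}(F(Y,2))\}$.

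There is no genuine obstacle; the only thing worth double-checking is that the hypotheses of the cited results match those of the present proposition, which they do: Theorem~\ref{relatice-tc-relatice-sec} imposes no extra conditions on $f$ or $g$, and Theorem~\ref{characterizacao-ppf} requires precisely that $Y$ be Hausdorff together with CP for $(X,Y;g)$, both of which are available by assumption.
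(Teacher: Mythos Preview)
Your argument is correct and coincides with the paper's own approach, which explicitly states that the proposition follows from Theorem~\ref{relatice-tc-relatice-sec} together with Theorem~\ref{characterizacao-ppf}. Your substitution of $\mathrm{sec}_g(\pi_{2,1}^Y)=2$ into the double inequality, followed by discarding the redundant lower-bound term, is exactly what is intended.
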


As a direct application of Proposition~\ref{cp-relative-tc} and Proposition~\ref{cohomology-cp} we have the following statement.

\begin{example}\label{exam:cohomology-inequalities}
  Let $m\geq 1$, $X$ be a space, and $g:X\to D^m$ be a map. If for a cohomology theory $h^\ast$ we have \begin{equation*}
 \left(g_|\right)^\ast(e^{m-1})\not\in\mathrm{Im}\left(i^\ast:h^\ast(X)\to h^\ast\left(g^{-1}(S^{m-1})\right)\right)   
\end{equation*} where $i:g^{-1}(S^{m-1})\hookrightarrow X$ is the inclusion map and $e^{m-1}\neq 0$ is an element of $h^{m-1}(S^{m-1})$, then \[2\leq\mathrm{TC}_g(\pi_{2,1}^{D^m})\leq\min\{\mathrm{TC}(\pi_{2,1}^{D^m}),2\mathrm{TC}(F(D^m,2))\}.\]   
\end{example}

Proposition~\ref{cp-relative-tc} also implies the following example.

\begin{example}\label{exam:cp-relativeTC-noncontractible}
  Let $X,Y$ be spaces where $Y$ is Hausdorff. Let $g:X\to Y$ be a map. If $(X,Y;g)$ has CP and $\mathrm{TC}_g(\pi_{2,1}^Y)\geq 3$, then $F(Y,2)$ is not contractible.   
\end{example}

 Furthermore, we have the following remark.

\begin{remark}\label{rem:relativeTC-cat}
  Let $X$, $Y$ be spaces where $Y$ is a connected $m$-dimensional topological manifold without boundary, and $m\geq 2$. Let $g:X\to Y$ be a map. One has, by Proposition~\ref{basic-properties}(6) together with Lemma~\ref{TFN}, \[\mathrm{TC}_g(\pi_{2,1}^Y)\leq \mathrm{cat}\left(F(Y,2)\times X\right).\]   
\end{remark}

Theorem~\ref{relatice-tc-relatice-sec} together with Remark~\ref{rem:relativeTC-cat} implies the following example.

\begin{example}\label{exam:cp-contractible-noncontractible}
   Let $X$, $Y$ be spaces where $Y$ is a connected $m$-dimensional topological manifold without boundary, and $m\geq 2$. Let $g:X\to Y$ be a map. Suppose that $(X,Y;g)$ has CP and $X$ is contractible. One has, 
   \begin{align*}
     2&=\mathrm{sec}_g(\pi_{2,1}^Y)\\
     &\leq \mathrm{TC}_g(\pi_{2,1}^Y)\\
     &\leq \mathrm{cat}\left(F(Y,2)\right).
   \end{align*} Then, $F(Y,2)$ is not contractible.  
\end{example}

By~\cite[Theorem 2.1, p. 29]{zapata2017non}, for any connected finite-dimensional topological manifold $M$, using the Fadell-Neuwirth fibration $\pi^M_{2,1}:F(M.2)\to M$, C. A. I. Zapata shows that $F(M,2)$ is not contractible.  Example~\ref{exam:cp-contractible-noncontractible} provides conditions for the non-contractibility of $F(M,2)$ in terms of the CP. Meanwhile, Example~\ref{exam:cp-relativeTC-noncontractible} gives conditions for the non-contractibility of $F(Y,2)$ in terms of the CP and values of $\mathrm{TC}_g(\pi_{2,1}^Y)$ for any Hausdorff space $Y$.  


\bibliographystyle{plain}

\end{document}